\theoremstyle{plain}
\newtheorem{theorem}{Theorem}[section]
\newtheorem{claim}[theorem]{Claim}
\newtheorem{corollary}[theorem]{Corollary}
\newtheorem{definition}[theorem]{Definition}
\newtheorem{example}[theorem]{Example}
\newtheorem{lemma}[theorem]{Lemma}
\newtheorem{proposition}[theorem]{Proposition}
\theoremstyle{remark}
\newcommand{\field}[1]{\mathbb{#1}}
\newcommand{\CC}{\field{C}}
\newcommand{\RR}{\field{R}}
\newcommand{\ZZ}{\field{Z}}
\newcommand{\Z}{\field{Z}}
\def\SC{H}
\DeclareMathOperator{\Spec}{Spec}
\DeclareMathOperator{\rank}{rank}
\begin{document}
	
\title[Gorenstein graphic matroids]{Gorenstein graphic matroids}

\author[T. Hibi]{Takayuki Hibi}
\address{Department of Pure and Applied Mathematics,Graduate School of Information Science and Technology,
Osaka University, Suita, Osaka 565-0871, Japan}
\email{hibi@math.sci.osaka-u.ac.jp}

\author[M. Laso\'{n}]{Micha{\l} Laso\'{n}}
\address{Institute of Mathematics of the Polish Academy of Sciences, 00-656 Warszawa, Poland}
\email{michalason@gmail.com}

\author[K. Matsuda]{Kazunori Matsuda}
\address{Kitami Institute of Technology, Kitami, Hokkaido 090-8507, Japan}
\email{kaz-matsuda@mail.kitami-it.ac.jp}

\author[M. Micha{\l}ek]{Mateusz Micha{\l}ek}
\address{Institute of Mathematics of the Polish Academy of Sciences, 00-656 Warszawa, Poland}
\address{Max Planck Institute, Mathematics in Sciences, Inselstrasse 22,
04103 Leipzig, Germany}
\address{Aalto University, Espoo, Finland}
\email{wajcha2@poczta.onet.pl}

\author[M. Vodi\v{c}ka]{Martin Vodi\v{c}ka}
\address{Max Planck Institute, Mathematics in Sciences, Inselstrasse 22,
04103 Leipzig, Germany}
\email{mato.vodicka@gmail.com}

\thanks{Takayuki Hibi was partially supported by JSPS KAKENHI 19H00637. Micha{\l} Laso\'{n} and Mateusz Micha{\l}ek were supported by Polish National Science Centre grant no. 2015/19/D/ST1/01180. Kazunori Matsuda was partially supported by JSPS KAKENHI 17K14165.}
\keywords{Graphic matroid, base polytope, independence polytope, toric ideal, Gorenstein property}

\begin{abstract}
The toric variety of a matroid is projectively normal, and therefore it is Cohen--Macaulay. We provide a complete graph-theoretic classification when the toric variety of a graphic matroid is Gorenstein.
\end{abstract}

\maketitle 

\section{Introduction}

\subsection{Motivation}

Toric varieties is a rich and important class of algebraic varieties defined by the underlaying combinatorial objects. When an algebraic variety is constructed using only combinatorial data, one expects to express its properties by combinatorial means. An attempt to achieve this description often leads to surprisingly difficult combinatorial problems. The case of the toric variety of a matroid is a particularly interesting example.  

The toric variety of a matroid is defined as the toric variety associated to the base polytope of the matroid. White \cite{Wh77} proved that the toric variety of a matroid is projectively normal. Hence, by a celebrated result of Hochster, it satisfies Cohen--Macaulay property. White \cite{Wh80} provided also a conjectural description of generators of the toric ideal of a matroid -- the ideal defining the matroid variety. In particular, he conjectured that the toric ideal of a matroid is generated by quadrics. White's conjecture in full generality remains open since its formulation in $1980$. Blasiak \cite{Bl08} confirmed it for graphical matroids. For general matroids it was proved `up to saturation' \cite{LM14}, see also \cite{La16} for further improvements. The toric variety of a representable matroid (in particular of a graphic matroid) has a nice geometric description -- by a result of Gelfand, Goresky, MacPherson and Serganova \cite{GGMS87} it is isomorphic to the torus orbit closure in a Grassmannian.

In this paper we, in the spirit of algebraic geometry, classify matroids whose toric variety satisfies Gorenstein property. This study was initiated by Herzog and Hibi \cite{HeHi02} who observed that Gorenstein property partitions matroids in an interesting way. As they doubt at full classification (among all matroids), we focus on graphic matroids -- one of the basic classes motivating the concept of matroid. We provide a complete classification of graphic matroids whose toric variety is Gorenstein. We believe that our study, apart from algebraic meaning, is also interesting from its own combinatorial sake.

\subsection{Gorenstein rings  and polytopes}

The Gorenstein property of a ring was introduced by Grothendieck. It reflects many symmetries of cohomological properties of the ring. It is also a condition which implies that the singularities of the spectrum of the ring are not `too bad'.  In particular, regular rings and complete intersections are Gorenstein, and furthermore Gorenstein rings are always Cohen--Macaulay.
We do not present a formal definition of the Gorenstein property, as it is out of scope of the article. Instead, we provide several equivalent definitions when a semigroup algebra over a normal polytope $P$ is Gorenstein. 

Let $P\subset \RR^d$ be a normal lattice polytope (i.e. vertices of $P$ are integer, and every point from $kP\cap\ZZ^d$ is a sum of $k$ points from $P\cap\ZZ^d$) and let $\CC[P]$ be the semigroup algebra for the semigroup of lattice points in the cone $C(P)$ over $P$, which is a standard toric construction \cite{Ful93,CLS11,Stks,BG}. Then the following conditions are equivalent \cite{Ba94,Hi92}:
\begin{enumerate}
	\item $\CC[P]$ is a Gorenstein algebra,
	\item the canonical divisor of $\Spec \CC[P]$ is Cartier,
	\item the numerator of the Hilbert series of $\CC[P]$ is palindromic,
	\item in a multiple of $P$ there is a lattice element $v\in\delta P$, such that the polytope $\delta P-v$ is a reflexive polytope.
\end{enumerate}

Recall that a lattice polytope is \emph{reflexive} if its dual polytope is also a lattice polytope. By virtue of the work of Batyrev \cite{Ba94} reflexive polytopes play an important role in mirror symmetry and are very intensively studied,  e.g. \cite[Section 8.3]{CLS11} and references therein. Furthermore, relations of the Gorenstein property to combinatorics are also an important research topic \cite{BR07, HO06, HHO11, HJM18, HKM17, St91}. 

We will use the following description of the Gorenstein property:

\begin{definition}\label{def:charGor}
For a positive integer $\delta$, a full-dimensional normal lattice polytope $P\subset \RR^d$ is \emph{$\delta$-Gorenstein} if there exists a lattice point $v\in\delta P$ such that for every supporting hyperplane of the cone over $P$ its reduced equation $h$ (i.e. $h$ such that $h(\ZZ^d)=\ZZ$) satisfies $h(v)=1$. A normal lattice polytope $P$ is \emph{Gorenstein} if $P$ is $\delta$-Gorenstein (in the lattice it spans affinely) for some positive integer $\delta$.
\end{definition}

It is straightforward to argue:

\begin{lemma}\label{product}
	Let $P_1,P_2$ be two normal lattice polytopes. The product polytope $P_1\times P_2$ is $\delta$-Gorenstein if and only if both $P_1$ and $P_2$ are $\delta$-Gorenstein.
\end{lemma}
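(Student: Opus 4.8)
The plan is to reduce the statement to elementary bookkeeping about faces of a product polytope, since the $\delta$-Gorenstein property of Definition~\ref{def:charGor} is a condition relating a single lattice point to the facet-supporting hyperplanes of the cone. Set $Q:=P_1\times P_2\subset\RR^{d_1+d_2}$. First I would record three facts. (i) $Q$ is again a full-dimensional normal lattice polytope, and for every positive integer $\delta$ one has $\delta Q=(\delta P_1)\times(\delta P_2)$; in particular $\delta Q\cap\ZZ^{d_1+d_2}=(\delta P_1\cap\ZZ^{d_1})\times(\delta P_2\cap\ZZ^{d_2})$, the normality of $Q$ following by splitting each lattice point of $kQ$ into its two coordinate blocks and applying normality of $P_1$ and $P_2$ separately. (ii) The facets of $Q$ are exactly the sets $F\times P_2$ with $F$ a facet of $P_1$, together with the sets $P_1\times G$ with $G$ a facet of $P_2$; this is the standard description of the face lattice of a product, since a face of $Q$ of dimension $d_1+d_2-1$ must be a product of a face of $P_1$ and a face of $P_2$ whose dimensions sum to $d_1+d_2-1$, forcing one of them to be a facet and the other the whole factor. (iii) If a facet $F$ of $P_1$ has primitive inner normal $a\in\ZZ^{d_1}$, so that the reduced equation read off from it is $h(y)=\langle a,y\rangle-c$ with $c=\min_{y\in P_1}\langle a,y\rangle\in\ZZ$, then the corresponding facet $F\times P_2$ of $Q$ has primitive inner normal $(a,0)\in\ZZ^{d_1+d_2}$ and reduced equation $H(x_1,x_2)=\langle a,x_1\rangle-c$, with the same constant $c$; symmetrically for facets coming from $P_2$. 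Thus the reduced equations of the facet-supporting hyperplanes of the cone over $Q$ are precisely those coming from $P_1$, extended to $\RR^{d_1+d_2}$ so as to ignore the $P_2$-block, together with those coming from $P_2$.

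With this dictionary both implications are immediate. For the ``if'' direction, suppose $v_1\in\delta P_1$ and $v_2\in\delta P_2$ are lattice points witnessing that $P_1$ and $P_2$ are $\delta$-Gorenstein. By (i), $v:=(v_1,v_2)$ is a lattice point of $\delta Q$. Given any facet-supporting hyperplane of the cone over $Q$, its reduced equation is, by (iii), one of the equations above; since evaluating it at $v$ only involves $v_1$ or only involves $v_2$, the value equals $1$ by the corresponding hypothesis on $v_1$ or $v_2$. Hence $Q$ is $\delta$-Gorenstein. For the ``only if'' direction, let $v$ be a lattice point of $\delta Q$ witnessing that $Q$ is $\delta$-Gorenstein and write $v=(v_1,v_2)$ with $v_i\in\delta P_i\cap\ZZ^{d_i}$, using (i). Applying the Gorenstein condition for $Q$ to the facets of the form $F\times P_2$ and using (iii) gives $\langle a,v_1\rangle-c=1$ for every facet $F$ of $P_1$, i.e. $v_1$ witnesses that $P_1$ is $\delta$-Gorenstein; symmetrically, $v_2$ witnesses that $P_2$ is $\delta$-Gorenstein.

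I do not expect a genuine obstacle, as the argument is essentially bookkeeping; the only point that needs a little care is the compatibility of the ambient lattices in step (iii), namely that the affine lattice spanned by $Q\cap\ZZ^{d_1+d_2}$ is the product of the affine lattices spanned by $P_1\cap\ZZ^{d_1}$ and $P_2\cap\ZZ^{d_2}$, so that ``reduced'' equations for $Q$ restrict to ``reduced'' equations for each factor and conversely. Granting this (or simply assuming, as one may, that each $P_i$ affinely spans $\ZZ^{d_i}$), assertion (iii) is transparent and the two displayed computations above complete the proof.
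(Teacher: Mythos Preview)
Your argument is correct. The paper does not actually prove this lemma; it only prefaces the statement with ``It is straightforward to argue,'' so there is nothing to compare against beyond that remark. Your proposal supplies precisely the bookkeeping the authors omit: the facets of $P_1\times P_2$ are the products $F\times P_2$ and $P_1\times G$, the corresponding reduced facet equations of the cone involve only one coordinate block, and hence a witness $v=(v_1,v_2)\in\delta(P_1\times P_2)$ decomposes into witnesses $v_i\in\delta P_i$ and conversely. The one caveat you flag---that the affine lattice spanned by $Q$ is the product of those spanned by $P_1$ and $P_2$, so that ``reduced'' is preserved under the splitting---is indeed the only point requiring care, and it holds. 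A small notational remark: in the paper's Definition~\ref{def:charGor} the ``reduced equation'' is a linear form on the cone (so one should think of $v$ as the point $(v,\delta)$ at height~$\delta$), whereas you write $h(y)=\langle a,y\rangle-c$ as an affine function on~$\RR^{d_1}$; the two conventions match once one homogenizes, and your computation goes through unchanged.
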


\begin{proposition}\label{prop:charGor}
	Let $P$ be a normal lattice polytope. The semigroup algebra $\CC[P]$ is a Gorenstein algebra if and only if $P$ is Gorenstein.
\end{proposition}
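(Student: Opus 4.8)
The plan is to read the statement off the list of equivalent conditions displayed above — concretely from the equivalence of $(1)$ and $(4)$ — once we check that condition $(4)$ is merely a reformulation of Definition~\ref{def:charGor}. We may assume that $P$ is full-dimensional and that $P\cap\ZZ^d$ affinely spans $\ZZ^d$: the general case follows by passing to the affine lattice generated by the lattice points of $P$, which is exactly the lattice in which $\CC[P]$ and Definition~\ref{def:charGor} are set up.

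First I would express both sides in terms of the facets of $P$. Write $P=\{x\in\RR^d:\langle a_F,x\rangle+b_F\ge 0\text{ for every facet }F\}$, where $a_F$ is primitive in $\ZZ^d$ and $b_F\in\ZZ$, so that $\langle a_F,\cdot\rangle+b_F$ is the reduced equation of the supporting hyperplane of $P$ along $F$. The facets of the cone $C(P)$ over $P$ — the cone entering the construction of $\CC[P]$ — are in bijection with the facets of $P$, and the reduced equation $h_F$ of the supporting hyperplane of $C(P)$ along $F$ is the homogenisation of $\langle a_F,\cdot\rangle+b_F$; primitivity of $h_F$ in the ambient lattice follows from primitivity of $a_F$. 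A lattice point $v\in\delta P$ represents a lattice point $\tilde v$ of $C(P)$, on which $h_F(\tilde v)=\langle a_F,v\rangle+b_F\delta$. Hence Definition~\ref{def:charGor} says that $P$ is Gorenstein if and only if, for some positive integer $\delta$, there is a lattice point $v\in\delta P$ with
\begin{equation}\label{eq:gor-facets}
\langle a_F,v\rangle+b_F\delta=1\qquad\text{for every facet }F.
\end{equation}

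Next I would bring condition $(4)$ into the same shape. Translating by $v$ gives $\delta P-v=\{y:\langle a_F,y\rangle+c_F\ge 0\text{ for every }F\}$ with $c_F:=b_F\delta+\langle a_F,v\rangle$, so the facet of $\delta P-v$ on the $F$-side lies on the hyperplane $\{\langle a_F,y\rangle=-c_F\}$, which, since $a_F$ is primitive, sits at lattice distance $c_F$ from the origin. By the standard facet description of reflexive polytopes (see e.g.\ \cite[Section~8.3]{CLS11}), the lattice polytope $\delta P-v$ is reflexive if and only if the origin lies in its interior — which forces every $c_F>0$ — and each facet sits at lattice distance $1$, that is, if and only if $c_F=1$ for all $F$, which is precisely \eqref{eq:gor-facets}. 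Observe also that \eqref{eq:gor-facets} already forces $c_F=1>0$ for all $F$, hence $v\in\inte(\delta P)$ automatically, so requiring only $v\in\delta P$ in Definition~\ref{def:charGor} loses nothing. Combining, condition $(4)$ holds if and only if $P$ is Gorenstein in the sense of Definition~\ref{def:charGor}; by the quoted equivalence $(1)\Leftrightarrow(4)$ this is equivalent to $\CC[P]$ being a Gorenstein algebra, as asserted.

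I do not expect a genuine obstacle here: the argument is a dictionary between two formulations of the same property, with \eqref{eq:gor-facets} as the bridge. The mild care needed is bookkeeping — reading ``supporting hyperplane of the cone over $P$'' as facet-defining; matching primitivity of the homogenised facet functionals with primitivity of the facet functionals of $P$; the reduction to the full-dimensional, lattice-spanning case; and aligning the normalisation conventions of \cite{Ba94} and \cite{Hi92} that underlie the equivalence $(1)\Leftrightarrow(4)$.
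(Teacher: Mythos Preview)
Your proof is correct and follows essentially the same route as the paper: both arguments verify that Definition~\ref{def:charGor} is a reformulation of condition~(4) and then invoke the quoted equivalence $(1)\Leftrightarrow(4)$. The only cosmetic difference is that you phrase reflexivity of $\delta P-v$ via the lattice-distance-$1$ criterion on facets, whereas the paper phrases it via integrality of the vertices of the dual polytope $(\delta P-v)^*$; these are two standard equivalent descriptions and your equation~\eqref{eq:gor-facets} is exactly the bridge the paper uses implicitly.
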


\begin{proof}
When $P$ is $\delta$-Gorenstein, then vertices of the dual polytope $(\delta P-v)^*$ correspond to reduced equations of supporting hyperplanes of the cone over $P$, and therefore they are integer. Conversely, if $h(v)>1$ for the reduced equation of a supporting hyperplane of the cone over $P$, then the coordinates of the corresponding vertex of the dual polytope $(\delta P-v)^*$ are fractions with denominator $h(v)$, hence they are not integer. 
\end{proof}

\subsection{Matroid polytopes and graphic matroids}

Let $M$ be a matroid on a ground set $E$ with the set of bases $\mathfrak{B}$ and the set of independent sets $\mathfrak{I}$ (we refer the reader to \cite{Ox11} for a comprehensive monograph on matroids). 

\begin{definition}
The \emph{base polytope} of a matroid $M$, denoted by $B(M)$, is the convex hull of all bases indicator vectors in $\ZZ^{|E|}$, that is $e_B:=\sum_{b\in B} e_b$ for $B\in \mathfrak{B}$.\newline
The \emph{independence polytope} of a matroid $M$, denoted by $P(M)$, is the convex hull of all independent sets indicator vectors in $\ZZ^{|E|}$, that is $e_I:=\sum_{i\in I} e_i$ for $I\in \mathfrak{I}$.
\end{definition}

Notice that $B(M)$ is contained in a hypersimplex, and $B(M)$ is a facet of $P(M)$. Moreover, every edge of a matroid base polytope $B(M)$ is a parallel translate of $e_{i}-e_{j}$ for some $i,j\in E$ \cite{GGMS87}. In other words, the edges of $B(M)$ correspond exactly to the pairs of bases that satisfy the symmetric exchange property (see \cite{La15} for more exchange properties). As already mentioned both polytopes $B(M)$ and $P(M)$ are normal \cite{Wh77}. 

\begin{definition}
Let $G=(V,E)$ be a finite undirected (multi-)graph. The \emph{graphic matroid} corresponding to $G$, denoted by $M(G)$, is a matroid on the set $E$ whose independent sets are the forests in $G$. A set $B\subset E$ is a basis of $M(G)$ if and only if $B$ is a spanning forest of $G$. 
\end{definition}

\begin{example}
	Consider $3$-cycle $C_3$. The base polytope of the graphic matroid of $C_3$, that is $B(M(C_3))$, is a simplex spanned by three vertices $(1,1,0),(1,0,1),(0,1,1)$. Thus the algebra $\CC[B(M(C_3)]$ is a polynomial ring, in particular it is Gorenstein. 
\end{example}

Recall that a matroid is called \emph{connected} if it is not a direct sum of two matroids. In particular, graphic matroid $M(G)$ is connected if and only if $G$ is $2$-connected. Moreover, the base polytope (and the independence polytope) of a matroid is the product of base polytopes (correspondingly independence polytopes) of its connected components. Hence, the base polytope (and the independence polytope) of $M(G)$ is the product of base polytopes (correspondingly independence polytopes) of graphic matroids corresponding to $2$-connected components of $G$. 

By the above remark and Lemma \ref{product}, the polytope $B(M(G))$ (and $P(M(G))$) is $\delta$-Gorenstein if and only if for every $2$-connected component $H$ of $G$ the polytope $B(M(H))$ ($P(M(H))$) is $\delta$-Gorenstein. Therefore in the proceeding sections we restrict to $2$-connected graphs. 

If $e$ is a loop in a matroid $M$, then polytopes $P(M)$ and $P(M\setminus\{e\})$ (also $B(M)$ and $B(M\setminus\{e\})$) are lattice isomorphic. Thus, $P(M)$ is Gorenstein if and only if $P(M/\{e\})$ is. Since adding or subtracting loops does not affect the property of being Gorenstein, we restrict to loopless matroids.

\subsection{Our results}

We provide a complete graph-theoretic classification of graphs $G$ for which the semigroup algebra of lattice points in the base polytope of the graphic matroid of $G$, that is $\CC[B(M(G))]$, is a Gorenstein algebra.
Theorems \ref{TranslationB}, \ref{thm:d>2} and \ref{2-characterization} add up to the following result.

\begin{theorem}\label{thm:d>2}
	Let $G$ be a finite simple undirected graph. The following conditions are equivalent:
	\begin{enumerate}
		\item $\CC[B(M(G))]$ is a Gorenstein algebra,
		\item there exists a positive integer $\delta$ such that every $2$-connected component of $G$ can be obtained, when	$\delta>2$ using constructions from Propositions \ref{construction1} and \ref{construction2} from a $\delta$-cycle, when $\delta=2$ can be obtained using construction from Proposition \ref{construction3} from the clique $K_4$.
	\end{enumerate} 
\end{theorem}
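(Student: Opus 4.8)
The plan is to reduce the Gorenstein property of $\CC[B(M(G))]$ to a concrete combinatorial condition on $G$ via Definition~\ref{def:charGor}, and then translate that condition into the graph constructions. By Proposition~\ref{prop:charGor} it suffices to decide when $B(M(G))$ is $\delta$-Gorenstein for some $\delta$, and by the remarks on connected components and loops we may assume $G$ is $2$-connected (equivalently $M(G)$ is connected) and loopless. The first step is to write down the facet description of $B(M(G))$: the facets of a connected matroid base polytope come from the flats (or from the rank inequalities $\sum_{e\in F} x_e \le r(F)$ together with the coordinate inequalities $x_e\ge 0$) plus the single equality $\sum_{e\in E} x_e = r(M)$ cutting out the affine span. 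For a graphic matroid these flats are described by vertex partitions / edge cuts, so each facet inequality has an explicit combinatorial meaning. One then computes, for each such facet, its reduced equation $h$ (primitive in the lattice spanned by $B(M(G))$), and asks for a lattice point $v$ in some dilate $\delta B(M(G))$ with $h(v)=1$ for \emph{every} facet simultaneously; by averaging/symmetry considerations the natural candidate for $v$ is (a multiple of) the ``barycentric'' point, i.e. $v$ proportional to the all-ones vector, so the whole question becomes whether the constants $r(F)$ appearing in the facet inequalities are all in a fixed ratio to $|F|$ up to the required normalization. This is exactly where the arithmetic of $\delta$ enters and where the dichotomy $\delta=2$ versus $\delta>2$ comes from.

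The next step is to carry out this analysis structurally rather than case by case. I would first handle the ``building blocks'': show directly that a $\delta$-cycle $C_\delta$ has $B(M(C_\delta))$ equal to a dilated simplex (its base polytope is the hypersimplex $\Delta_{\delta-1,\delta}$), which is well known to be Gorenstein with parameter $\delta$, and that $K_4$ gives a $2$-Gorenstein polytope (this is presumably the content of Theorem~\ref{2-characterization} and the earlier $C_3$ example generalizes it). Then I would prove that the graph operations in Propositions~\ref{construction1}, \ref{construction2} (for $\delta>2$) and Proposition~\ref{construction3} (for $\delta=2$) preserve $\delta$-Gorensteinness of the base polytope: each such operation should correspond to a controlled modification of the facet structure (subdividing an edge, gluing along a vertex or edge, adding a parallel class, etc.), and one checks that the distinguished lattice point $v$ and the value $\delta$ are compatible across the operation. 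Lemma~\ref{product} already packages the ``gluing at a vertex / direct sum'' part of this. This gives the implication (2)$\Rightarrow$(1).

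For the converse (1)$\Rightarrow$(2), the argument is an induction on $|E(G)|$ (or on $|V(G)|$): assuming $G$ is $2$-connected, loopless, and $B(M(G))$ is $\delta$-Gorenstein, I want to exhibit a reduction move — the inverse of one of the constructions — that strictly shrinks $G$ while keeping it $2$-connected and $\delta$-Gorenstein. The key local lemma will say: if $G$ is not already a $\delta$-cycle (resp. not $K_4$ when $\delta=2$), then $G$ contains a configuration (a vertex of degree $2$, a ``reducible'' ear, a separating pair realizing one of the constructions, or a parallel edge) whose removal/contraction is an inverse construction step; here the Gorenstein hypothesis is used to rule out ``bad'' local configurations, because a forbidden subconfiguration would force two facet equations with incompatible values at $v$, contradicting $\delta$-Gorensteinness. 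Iterating lands us at the base case, which by the $\delta$-Gorenstein analysis of $C_\delta$ and $K_4$ must be exactly a $\delta$-cycle (for $\delta>2$) or $K_4$ (for $\delta=2$), since no other $2$-connected graph has a base polytope that is $\delta$-Gorenstein and admits no further reduction.

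The main obstacle I expect is the converse direction, specifically proving the local reduction lemma: one must show that \emph{every} minimal $2$-connected graph with $\delta$-Gorenstein base polytope is $C_\delta$ or $K_4$, which requires a careful combinatorial case analysis of how the rank function $r(F)=|V(F)|-c(F)$ interacts with the single linear functional $v$ — in particular extracting from ``$h_F(v)=1$ for all flats $F$'' the rigidity statement that the numbers $|V(F)|-c(F)$ are determined by the ambient $\delta$ in a way that only cycles and $K_4$ can satisfy. Relatedly, identifying the precise constructions in Propositions~\ref{construction1}--\ref{construction3} so that they are simultaneously (a) Gorenstein-preserving and (b) exhaustive (every reducible configuration is covered) is the delicate bookkeeping step; the dilation parameter $\delta$ must be threaded consistently through all of it, and the qualitative jump at $\delta=2$ (where $K_4$ replaces the cycle as the unique base case) has to be explained by the fact that only for $\delta=2$ does a second ``seed'' graph satisfy the rigidity constraints.
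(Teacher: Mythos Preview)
Your high-level strategy—reduce to $2$-connected $G$, verify that the constructions preserve $\delta$-Gorensteinness, and run an induction/reduction for the converse—matches the paper's architecture. However, there is a genuine gap at the technical core.

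Your claim that ``by averaging/symmetry considerations the natural candidate for $v$ is (a multiple of) the barycentric point, i.e.\ $v$ proportional to the all-ones vector'' is incorrect for $\delta>2$, and this error propagates through the rest of the plan. The paper's Theorem~\ref{TranslationB} shows that the Gorenstein point $v$ corresponds to a weight function $w:E\to\{1,\delta-1\}$ where $w(e)=1$ when $G\setminus e$ is $2$-connected and $w(e)=\delta-1$ when $G/e$ is $2$-connected. Different edges carry different weights, and the $\delta$-Gorenstein condition becomes the system $(\spadesuit)_\delta$: $w(E)=\delta(|V|-1)$ and $w(E(S))+1=\delta(|S|-1)$ for every good flat $S$. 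Your proposed reformulation ``whether the constants $r(F)$ are all in a fixed ratio to $|F|$'' does not capture this; it conflates the $\delta=2$ case (where indeed $w\equiv 1$, see Corollary~\ref{cor:char d=2}) with the general one.

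This matters for both directions. For $(2)\Rightarrow(1)$, the constructions in Propositions~\ref{construction1} and \ref{construction2} are stated in terms of edge weights (glue $\delta-1$ graphs along edges of weight $\delta-1$; subdivide an edge of weight $1$ into a path of $\delta-1$ edges each of weight $\delta-1$), and the preservation proofs track how $w$ transforms—you cannot verify them with an all-ones $v$. For $(1)\Rightarrow(2)$, the paper's induction (Section~\ref{4}) first eliminates any weight-$1$ edge via Claim~\ref{lem:decompedge1}, then in the all-weight-$(\delta-1)$ regime analyzes $(\delta-1)$-ears (Claims~\ref{lem:pathsind-1}--\ref{lem:goodpathexists}) and closes with a nonexistence argument (Claim~\ref{lem:dziwny}); the weight dichotomy is precisely what drives this case split, and your ``local reduction lemma'' sketch gives no mechanism for locating the right configurations without it. Finally, the $\delta=2$ case is not special because ``a second seed graph satisfies the rigidity constraints'' but because the weight function degenerates to $w\equiv 1$, which changes the available moves (collision in Proposition~\ref{construction3} replaces the gluing/subdivision pair).
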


The same type of result we obtain for the algebra associated to the independence polytope $\CC[P(M(G))]$ corresponding to a multigraph $G$. Theorems \ref{TranslationP}, \ref{caseP} add up to the following.

\begin{theorem}
	Let $G$ be a finite undirected multigraph. The following conditions are equivalent:
	\begin{enumerate}
		\item $\CC[P(M(G))]$ is a Gorenstein algebra,
		\item there exists an integer $\delta\geq 2$ such that every $2$-connected component of $G$ is a $(\delta-1)$-blow up of a graph that is $\delta$-chordal (any cycle without a chord has exactly $\delta+1$ elements) and has no $K_4$ minor,
		\item there exists an integer $\delta\geq 2$ such that every $2$-connected component of $G$  is a $(\delta-1)$-blow up of a graph that can be constructed from the clique $K_2$, by at each step adding a new $(\delta+1)$-cycle to an edge of the preceding graph.
	\end{enumerate} 
\end{theorem}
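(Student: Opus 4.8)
The plan is to split the proof into an algebraic ``translation'' step and a purely graph-theoretic step, mirroring the structure announced in the introduction. By Lemma \ref{product} and the reductions preceding it, it suffices to treat a single $2$-connected multigraph $G$. By Proposition \ref{prop:charGor} and Definition \ref{def:charGor}, $\CC[P(M(G))]$ is Gorenstein exactly when there is a positive integer $\delta$ and a lattice point $v$ in $\delta P(M(G))$ on which the reduced equation of every facet of the cone over $P(M(G))$ takes the value $1$. So the first task is the facet description of $P(M(G))$: for a loopless matroid the facets of $P(M)$ are the non-negativity facets $x_e\ge 0$ $(e\in E)$ together with the rank facets $x(F)\le r(F)$ where $F$ ranges over the flats of $M$ with $M|_F$ connected; for $M=M(G)$ the latter are exactly the edge-sets $F=E(G[W])$ with $W\subseteq V(G)$ and $G[W]$ $2$-connected. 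Homogenising, the reduced equation of such a rank facet is $r(F)x_0-x(F)$ while that of a non-negativity facet is $x_e$. Writing $v=(w,\delta)$, the value-$1$ conditions read $w_e=1$ for all $e$ (so $w=\mathbf 1$) and $\delta\, r(F)=|F|+1$ for every facet flat $F$; the remaining requirement $v\in\delta P(M(G))$ then holds automatically, since $\delta r(F)=|F|+1>|F|$ places $\mathbf 1/\delta$ in the interior. This yields the translation: $\CC[P(M(G))]$ is Gorenstein iff there is an integer $\delta\ge 2$ with $\delta\, r(F)=|F|+1$ for every $F=E(G[W])$, $G[W]$ $2$-connected; and such a $\delta$ is uniquely determined (read off from $F=E$, say).

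Granting this, the second step analyses the condition $(\star)$: $\delta\, r(F)=|F|+1$ for all facet flats $F$. Applying $(\star)$ to a parallel class $F$ (where $r(F)=1$) forces every parallel class of $G$ to have exactly $\delta-1$ edges, so $G=H^{(\delta-1)}$ is the $(\delta-1)$-blow-up of a simple $2$-connected graph $H$; one then checks that the facet flats of $G$ correspond to the subsets $W$ with $H[W]$ $2$-connected, and, setting $\mu(J):=|E(J)|-|V(J)|+1$ for a connected graph $J$, that $(\star)$ becomes $(\delta-1)\,\mu(H[W])=|W|-2$ for all such $W$. Specialising to $W=V(C)$ for a chordless cycle $C$ of $H$ (so $\mu=1$) gives $|V(C)|=\delta+1$: thus $H$ is $\delta$-chordal. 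To get that $H$ has no $K_4$ minor, suppose it does; since $K_4$ has maximum degree $3$, $H$ then contains a subdivision of $K_4$, and passing to a minimal $2$-connected induced subgraph of $H$ carrying a $K_4$ minor one reduces to the case that $H$ itself is a subdivision $T$ of $K_4$. Its four ``triangular'' cycles (through three of the four branch vertices) are chordless, hence of length $\delta+1$ by $\delta$-chordality; summing the four relations gives $|V(T)|=2\delta$, whereas $(\star)$ for $W=V(T)$ (with $\mu=3$) forces $|V(T)|=3\delta-1$, so $\delta=1$, a contradiction. This proves $(1)\Rightarrow(2)$.

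For $(2)\Leftrightarrow(3)$ and $(2)\Rightarrow(1)$ one argues entirely combinatorially. The direction $(3)\Rightarrow(2)$ is an induction on the construction: attaching a new $(\delta+1)$-cycle to an edge $uv$ of a $2$-connected $\delta$-chordal $K_4$-minor-free graph keeps all three properties, since the only new chordless cycle is the attached $(\delta+1)$-cycle (any other cycle through the new path contains the chord $uv$), the attached path together with the edge $uv$ is a series-parallel enlargement (so no $K_4$ minor appears), and $2$-connectedness is obvious. For $(2)\Rightarrow(3)$ one shows that a $2$-connected $\delta$-chordal $K_4$-minor-free graph $H$ that is not a single edge is obtained from a smaller such graph by attaching a $(\delta+1)$-cycle to an edge: using that a simple $2$-connected series-parallel graph with at least three vertices has a vertex of degree $2$, one locates a maximal ``series path'' of degree-$2$ vertices whose endpoints are adjacent, $\delta$-chordality forces its length to be $\delta$, its deletion preserves $2$-connectedness and keeps us in the class, and one induces. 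Finally $(2)\Rightarrow(1)$: the class of $2$-connected $\delta$-chordal $K_4$-minor-free graphs is closed under passing to $2$-connected induced subgraphs (a chordless cycle of an induced subgraph is chordless in the ambient graph, and $K_4$-minor-freeness is minor-closed), so by $(3)$ every $2$-connected induced subgraph $H[W]$ of $H$ is built from $K_2$ by attaching $(\delta+1)$-cycles; counting vertices and edges along the construction gives $(\delta-1)\,\mu(H[W])=|W|-2$, i.e.\ $(\star)$ holds for $G=H^{(\delta-1)}$, and the translation of the first step gives $(1)$.

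The main obstacle is the graph theory of the second and third steps: controlling $2$-connected $\delta$-chordal $K_4$-minor-free graphs precisely enough to prove $(2)\Leftrightarrow(3)$ — in particular to always find a removable $(\delta+1)$-cycle attached to an edge (the series-path above need not, a priori, have adjacent endpoints, and ruling out the alternatives requires a careful look at the series-parallel structure and at the bridges of a chordless cycle) — together with the companion reduction, in the $K_4$-minor argument, to a spanning induced $K_4$-subdivision. Both amount to a somewhat delicate series-parallel decomposition analysis; by contrast the algebraic translation and the numerical consequences of $(\star)$ are routine.
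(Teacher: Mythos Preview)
Your overall architecture---reduce to a single $2$-connected block, translate the Gorenstein property for $P(M(G))$ into the numerical condition $(\star)$ on $2$-connected induced subgraphs via the facet description (equivalently Herzog--Hibi), deduce the blow-up, and then argue the three graph-theoretic equivalences---is exactly the paper's. The translation step and the deduction that every parallel class has size $\delta-1$ are correct and match Theorem~\ref{TranslationP}; your $(\delta-1)\mu(H[W])=|W|-2$ is just a rewriting of $(\clubsuit)_\delta$.

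Where your argument and the paper's diverge is in the combinatorics. For the $K_4$-minor-freeness in $(1)\Rightarrow(2)$ the paper does not pass to a minimal $2$-connected induced subgraph carrying a $K_4$ minor; instead it inducts by taking a \emph{maximal} proper $2$-connected induced subgraph $H'$, shows from $(\clubsuit)_\delta$ that $H$ is $H'$ together with a single path of length $\delta$ attached at an edge, and inherits $K_4$-minor-freeness from $H'$. This sidesteps the reduction you flag as an obstacle (a minimal $2$-connected induced subgraph with a $K_4$ minor need not be a $K_4$-subdivision), and it has the bonus of essentially proving $(1)\Rightarrow(3)$ directly.

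There is, however, a genuine gap in $(2)\Rightarrow(3)$---and it is not merely an obstacle to be overcome. Take $H=K_{2,3}$ and $\delta=3$: every chordless cycle of $K_{2,3}$ is a $4$-cycle, and $K_{2,3}$ is series--parallel, so condition~(2) of Theorem~\ref{caseP} holds; yet $K_{2,3}$ has five vertices while graphs built as in condition~(3) with $\delta=3$ always have an even number of vertices, so (3) fails. Moreover $(\clubsuit)_3$ fails for $S=V(K_{2,3})$ since $2\cdot 6+1=13\neq 12=3\cdot 4$, so (1) fails too. Thus the series path you locate \emph{can} have non-adjacent endpoints even under hypothesis~(2), and that alternative cannot be ruled out. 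The paper's own sentence ``this is also an attached $(\delta+1)$-cycle to an edge in $H$, as otherwise minor $K_4$ would appear'' is exactly where the same example breaks its proof of $(2)\Rightarrow(3)$. The equivalence $(1)\Leftrightarrow(3)$ survives (and is what the inductive argument above really establishes), but condition~(2) as stated is strictly weaker; any correct proof must either strengthen~(2) or bypass the implication $(2)\Rightarrow(3)$.
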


The case of the independence polytope is way much easier compared to the base polytope. We study independence polytopes in section \ref{2}, the remaining sections \ref{3}, \ref{4}, and \ref{5} treat base polytopes.
		
\section{Characterization of Gorenstein polytopes $P(M(G))$}\label{2}

In this section we characterize multigraphs $G$ for which $P(M(G))$ is Gorenstein. As we will see such multigraphs are related to graphs, by the blow up construction.

\begin{definition}
The $m$-blow up of a graph is a multigraph on the same set of vertices, in which every edge of the graph is replaced by $m$ parallel edges.
\end{definition}

The following theorem is a corollary of a result of Herzog and Hibi \cite{HeHi02} for the case of graphic matroids.

\begin{theorem}\label{TranslationP}
	Let $G$ be a multigraph. The polytope $P(M(G))$ is Gorenstein if and only if there exists an integer $\delta\geq 2$ such that $G$ is a $(\delta-1)$-blow up of a graph $H=(V,E)$ satisfying condition $(\clubsuit)_{\delta}$: $$(\delta-1)\lvert E(S)\rvert+1=\delta(\lvert S\rvert-1),$$
	for every set $S\subset V$ inducing a $2$-connected subgraph.
\end{theorem}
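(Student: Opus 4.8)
The plan is to apply the Herzog--Hibi characterization of Gorenstein polytopes for the (independence) polytope of a transversal-type matroid directly to the graphic case, and then translate the abstract numerical condition into the graph-theoretic statement involving blow ups and the identity $(\clubsuit)_\delta$. The key background fact, due to Herzog and Hibi \cite{HeHi02}, is that for a matroid $M$ on ground set $E$ the independence polytope $P(M)$ is Gorenstein if and only if there is an integer $\delta\geq 2$ and a function assigning to each flat (or, in their formulation, to each ``cyclic'' subset) a linear relation of the stated type; concretely, after normalizing so that the cone over $P(M)$ has its facets given by reduced equations $h$, one needs a lattice point $v\in\delta P(M)$ with $h(v)=1$ for all of them. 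So the first step is to recall which inequalities cut out $P(M(G))$: besides the coordinate inequalities $x_e\geq 0$, the defining inequalities are $\sum_{e\in E(S)} x_e \leq r(E(S)) = |S|-1$ for each vertex subset $S$ inducing a connected (indeed $2$-connected suffices, since disconnected $S$ give redundant inequalities that are sums of those of the $2$-connected blocks) subgraph with at least one edge, where $r$ is the rank function and $E(S)$ the induced edge set.

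Next I would homogenize: the cone $C(P(M(G)))$ over $P(M(G))$ in $\RR^{E}\times\RR$ (last coordinate the ``level'' $t$) has supporting hyperplanes $\{x_e = 0\}$ and $\{\sum_{e\in E(S)} x_e = (|S|-1)\,t\}$. One checks that each of these equations is already reduced, i.e. primitive over $\ZZ^{E}\times\ZZ$ — for the coordinate ones this is obvious, and for the flat inequality the coefficient vector is $(\mathbf 1_{E(S)}, -(|S|-1))$, whose gcd is $1$. By Definition \ref{def:charGor}, $P(M(G))$ is $\delta$-Gorenstein precisely when there is a lattice point $(w, \delta)\in \delta P(M(G))$ with $w_e = 1$ for every $e\in E$ (from the coordinate hyperplanes) and $\sum_{e\in E(S)} w_e = \delta(|S|-1)$ for every $2$-connected induced subgraph on $S$. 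The condition $w_e=1$ for all $e$ forces $w=\mathbf 1_E$, so that $(\mathbf 1_E,\delta)$ must lie in $\delta P(M(G))$ — i.e. $\mathbf 1_E/\delta\in P(M(G))$, equivalently $|E(S)| \leq \delta(|S|-1)$ for all connected $S$ and $1\leq \delta$ — and the flat equations become $|E(S)| = \delta(|S|-1)$ for every $2$-connected $S$. But this can hold for a genuine graph only in the trivial edgeless case, so one must allow multigraphs: if $G$ is the $(\delta-1)$-blow up of a simple (or loopless) graph $H=(V,E)$, then each coordinate hyperplane contributes $w_e=1$, but parallel edges are indistinguishable at the level of the exchange structure, and the correct reduced facet equations of $P(M(G))$ come from the underlying $H$; carefully, the flat inequality for $S$ in $G$ reads $\sum_{e\in E_G(S)} x_e \leq |S|-1$, and evaluating at $w=\mathbf 1$ gives $(\delta-1)|E_H(S)|$ on the left, which must equal... here one has to be careful because the rank of $E_G(S)$ is still $|S|-1$, so the naive equality would be $(\delta-1)|E_H(S)| = \delta(|S|-1)$, which is off by one — the resolution is that the lattice point realizing Gorensteinness need not be $(\mathbf 1,\delta)$ but $(\mathbf 1, \delta)$ shifted appropriately, or rather the reduced equation of the facet of the cone is not $\sum x_e = (|S|-1)t$ but a primitive rescaling, and tracking the gcd for the blown-up graph introduces exactly the ``$+1$'' yielding $(\delta-1)|E(S)| + 1 = \delta(|S|-1)$.

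Concretely the cleanest route: for $G$ the $(\delta-1)$-blow up of $H$, group the $x$-coordinates by the edges of $H$ and note $P(M(G))$ is (up to the obvious symmetry) determined by $H$ and $\delta$; compute that the cone $C(P(M(G)))$ is Gorenstein with index $\delta$ iff the lattice point $(\mathbf 1_{E_G}, \delta)$ (which sits in $\delta P(M(G))$ exactly when $|E_G(S)|\leq \delta(|S|-1)$, i.e. $(\delta-1)|E_H(S)|\leq \delta(|S|-1)$ for all connected $S$) pairs to $1$ with every reduced facet equation; the coordinate facets automatically give $1$, and the facet coming from a $2$-connected $S$ has coefficient vector $(\mathbf 1_{E_G(S)}, -(|S|-1))$ with gcd $\gcd(1,|S|-1)=1$, so its reduced equation evaluated at $(\mathbf 1,\delta)$ gives $(\delta-1)|E_H(S)| - \delta(|S|-1)$, and we need this to be... $-1$ rather than $0$, because the Gorenstein point is at ``level $\delta$'' on the boundary $\delta P$, so the value of a (suitably oriented) facet functional is $1$, i.e. $\delta(|S|-1) - (\delta-1)|E_H(S)| = 1$. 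That is precisely $(\clubsuit)_\delta$. I would then observe that $(\clubsuit)_\delta$ for every $2$-connected induced $S$ automatically implies the inequalities $(\delta-1)|E_H(S)|\leq\delta(|S|-1)$ needed for the Gorenstein point to lie in $\delta P$ (indeed with room to spare: equality up to $1$), so no separate feasibility check is needed, and conversely a Gorenstein $P(M(G))$ forces $G$ to be a blow up (parallel classes must all have the same size, since otherwise two parallel edges $e,f$ with unequal... no — rather, if some edge had multiplicity differing from others, the coordinate-hyperplane normalization still forces each $w_e=1$ but the flat equations for $2$-element circuits would clash). The main obstacle I anticipate is exactly this bookkeeping of the ``$+1$'': getting the primitivity/reduced-equation normalization right and seeing that the Gorenstein lattice point sits at level $\delta$ and therefore pairs to $1$ (not $0$) with the facet functionals, which is what converts the naive homogeneous relation $|E(S)| = \delta(|S|-1)$ into the affine-looking $(\delta-1)|E(S)|+1 = \delta(|S|-1)$; plus the small lemma that Gorensteinness forces constant edge-multiplicity so that $G$ really is a blow up.
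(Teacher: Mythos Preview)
Your approach is essentially correct and arrives at the right conclusion, but it takes a different route from the paper and the exposition contains several false starts that obscure the argument.

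\textbf{Comparison with the paper.} The paper's proof is a two-line application of Herzog and Hibi's Theorem~7.3 \cite{HeHi02}, which says $P(M)$ is Gorenstein iff there is $\alpha\geq 2$ with $\rank(A)=\frac{1}{\alpha}(|A|+1)$ for every indecomposable (i.e.\ connected) flat $A$. The paper then identifies indecomposable flats of $M(G)$ with vertex sets $S$ inducing $2$-connected subgraphs, obtaining $\alpha(|S|-1)=|E_G(S)|+1$; applying this to a pair of adjacent vertices forces the multiplicity of every edge to be $\alpha-1$, so $G$ is an $(\alpha-1)$-blow up of some $H$, and setting $\delta=\alpha$ yields $(\clubsuit)_\delta$. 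You instead bypass Herzog--Hibi and work directly from the facet description of $P(M(G))$ together with Definition~\ref{def:charGor}. This is more self-contained, but it means you are effectively re-deriving the relevant special case of their theorem.

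\textbf{What needs tightening.} First, you assert the facet description of $P(M(G))$ (coordinate hyperplanes plus $\sum_{e\in E(S)}x_e\leq |S|-1$ for $2$-connected induced $S$) without justification; you should either cite the standard result that the facets of the independence polytope correspond exactly to connected flats, or prove it. Second, your derivation of the ``$+1$'' wanders: the clean statement is simply that the reduced functional for the facet at $S$ is $h(x,t)=(|S|-1)t-\sum_{e\in E_G(S)}x_e$, and the Gorenstein point $(\mathbf 1_{E_G},\delta)$ must satisfy $h=1$, giving $|E_G(S)|+1=\delta(|S|-1)$ directly --- no rescaling subtleties. Third, your argument that $G$ must be a blow up is gestured at but not written out; the correct step (and the one the paper uses) is to apply the flat condition to $S=\{u,v\}$ for adjacent $u,v$, which gives $|E_G(\{u,v\})|=\delta-1$, so every parallel class has size $\delta-1$. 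You should also note that when an edge $e$ has no parallels, $x_e\leq 1$ is itself a facet, forcing $w_e=\delta-1$; combined with $w_e=1$ this gives $\delta=2$, consistent with the blow-up description.
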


\begin{proof} 
We recall a result of Herzog and Hibi.

\begin{theorem}[Theorem 7.3 \cite{HeHi02}]
The independence polytope $P(M)$ of a loopless matroid $M$ is Gorenstein if and only if there exists a constant $\alpha\in\Z_{\geq 2}$, such that
$$\rank(A)=\frac{1}{\alpha}(\lvert A\rvert+1),$$
for every indecomposable flat $A$, i.e.~a flat that is connected.
\end{theorem}

Suppose a set of edges $A$ forms an indecomposable flat in a graphic matroid $M(G)$. Let $S$ be the set of all endpoints of edges from $A$. As the restriction to the flat is a connected matroid, the pair $(S,A)$ is a $2$-connected multigraph. In particular, it is connected. Since $A$ is a flat, it becomes clear that any edge in $G$ between vertices of $S$ is in $A$. The converse is clear, that is if $S$ induces a $2$-connected subgraph of $G$, then the set $A$ of all edges between vertices $S$ is an indecomposable flat. Identifying an indecomposable flat $A$ with a set of vertices $S$ we obtain: $\alpha(|S|-1)=|E(S)|+1$. 

Assume $G$ is Gorenstein. Applying the above equality to any two vertices that are connected by an edge in $G$ we see that there are exactly $\alpha-1$ parallel edges between them. Hence, $G$ is indeed an $(\alpha-1)$-blow up of a graph $H$. Taking $\alpha=\delta$ we see that $H$ satisfies the desired equalities, as induced subgraphs of $H$ are $(\alpha-1)$-blow ups of induced subgraphs of $G$. The opposite implication follows the same way.
\end{proof}

\begin{theorem}\label{caseP}
	Let $H$ be a $2$-connected graph, and let $\delta\geq 2$. The following conditions are equivalent:
	\begin{enumerate}
		\item $H$ satisfies  $(\clubsuit)_{\delta}$,
		\item $H$ is $\delta$-chordal (any cycle without a chord has exactly $\delta+1$ elements) and has no $K_4$ minor,
		\item $H$ can be constructed from the clique $K_2$, by at each step adding a new $(\delta+1)$-cycle to an edge of the preceding graph.
	\end{enumerate} 
\end{theorem}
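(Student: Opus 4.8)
The plan is to prove the cycle of implications $(3)\Rightarrow(1)\Rightarrow(2)\Rightarrow(3)$. The implication $(3)\Rightarrow(1)$ is the easiest and I would do it by induction on the number of gluing steps: for $H=K_2$ the condition $(\clubsuit)_\delta$ is vacuous (no $2$-connected subgraph on $\geq 2$ vertices other than trivial edges, for which $|E(S)|=1$, $|S|=2$ gives $(\delta-1)+1=\delta$), and if $H'$ is obtained from $H$ by gluing a $(\delta+1)$-cycle $C$ along an edge $e$, then any vertex subset $S$ inducing a $2$-connected subgraph of $H'$ either lies entirely in $H$, or entirely in $C$, or — and this is the point to check carefully — cannot properly straddle the two pieces because the only shared vertices form the edge $e$, which is a $2$-cut separating the new cycle from the rest; a $2$-connected subgraph containing vertices on both sides would have to use both endpoints of $e$ and then, by the structure of the glued cycle, would have to contain all of $C$, reducing to an Euler-type count $(\delta-1)|E(S)|+1=\delta(|S|-1)$ that follows by adding the contributions $(\delta-1)(\delta+1)$ edges and $\delta+1$ vertices of the cycle minus the overlap, matching the inductive hypothesis for $H$.

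For $(1)\Rightarrow(2)$: assume $H$ is $2$-connected and satisfies $(\clubsuit)_\delta$, i.e. $(\delta-1)|E(H)|+1=\delta(|V(H)|-1)$ and the same for every $2$-connected induced subgraph. First I would observe that applying $(\clubsuit)_\delta$ to an induced cycle $C_k$ (which is $2$-connected with $|E|=|V|=k$) gives $(\delta-1)k+1=\delta(k-1)$, hence $k=\delta+1$; so every induced cycle — equivalently every chordless cycle — has exactly $\delta+1$ vertices, which is precisely $\delta$-chordality. For the no-$K_4$-minor part: if $H$ had a $K_4$ minor it would have a subgraph that is a subdivision of $K_4$ (for graphs, topological minor and minor coincide for $K_4$), and I would take a minimal $2$-connected induced subgraph $S$ containing such a subdivision; a direct counting argument on series-parallel versus non-series-parallel graphs is cleaner — recall that a $2$-connected graph has no $K_4$ minor iff it is series-parallel, and series-parallel $2$-connected graphs satisfy $|E|\leq 2|V|-3$. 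I would instead argue that $(\clubsuit)_\delta$ forces the "edge–vertex ratio" $|E|/(|V|-1)=\delta/(\delta-1)$ to be the same constant for $H$ and every $2$-connected induced subgraph, and then show a $K_4$-minor (equivalently a $K_4$-subdivision) produces a $2$-connected induced subgraph violating this rigidity, e.g. by exhibiting two different $2$-connected induced subgraphs whose forced ratios are incompatible, or by contracting to get an induced $K_4$ and checking $(\delta-1)\cdot 6+1\neq\delta\cdot 3$.

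For $(2)\Rightarrow(3)$, the main work: assume $H$ is $2$-connected, $\delta$-chordal, and has no $K_4$ minor, hence $H$ is series-parallel. I would induct on $|E(H)|$. If $H$ is a single cycle it is a $(\delta+1)$-cycle by $\delta$-chordality and we are done. Otherwise, since $H$ is $2$-connected series-parallel and not a single cycle, it has a nontrivial SPQR/ear decomposition: there is a vertex of degree $2$ or, dually, an edge that together with a path forms a separating structure — concretely, in a $2$-connected series-parallel graph that is not a cycle there exist two vertices $u,v$ joined by at least three internally disjoint paths is impossible (that would give a $K_4$-subdivision... actually the $\Theta$-graph is fine), so I would use: $H$ has a "$2$-separation" $(H_1,H_2)$ along a pair of vertices $\{u,v\}$ with $uv$ possibly a virtual edge. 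The key claim is that one can find such a separation in which $H_1$ is exactly a $(\delta+1)$-cycle through $u,v$ and $H_2$ (with the edge $uv$ added) is again $2$-connected, $\delta$-chordal, $K_4$-minor-free, with fewer edges — this is where $\delta$-chordality is essential: it forces any "ear" attached at a $2$-cut to be a path of length exactly $\delta$, so that the ear plus the edge it hangs on forms a $(\delta+1)$-cycle. I expect this last reduction step — locating the peelable $(\delta+1)$-cycle and verifying the smaller graph $H_2\cup\{uv\}$ still lies in class (2), in particular that adding the virtual edge $uv$ does not create a short chordless cycle — to be the main obstacle, and it will likely require a careful analysis of where chords can appear when the ear is collapsed to an edge, using $2$-connectivity to guarantee $uv\notin E(H)$ already or else that $H_1$ was the whole graph.
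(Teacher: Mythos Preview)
Your overall cycle $(3)\Rightarrow(1)\Rightarrow(2)\Rightarrow(3)$ and the arguments for $(3)\Rightarrow(1)$ and for the $\delta$-chordality half of $(1)\Rightarrow(2)$ match the paper essentially verbatim.

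The genuine gap is in the no-$K_4$-minor half of $(1)\Rightarrow(2)$. None of the options you list actually works. Condition $(\clubsuit)_\delta$ is a statement about \emph{induced subgraphs} of $H$, so ``contracting to get an induced $K_4$ and checking $(\delta-1)\cdot 6+1\neq 3\delta$'' is illegitimate: a minor is not an induced subgraph, and $(\clubsuit)_\delta$ says nothing about minors. The ``incompatible ratios'' idea and the ``minimal $2$-connected induced subgraph containing a $K_4$-subdivision'' idea are both left as gestures; in particular, the induced subgraph on the vertex set of a $K_4$-subdivision can have many extra edges, so no direct count goes through. The paper's argument is quite different and is really the heart of the proof: take an inclusion-maximal proper $2$-connected induced subgraph $H'$, use $2$-connectivity of $H$ to find a shortest path $p$ leaving and returning to $H'$, observe that $H'\cup p$ is $2$-connected and induced hence equals $H$ by maximality; comparing $(\clubsuit)_\delta$ for $H$ and $H'$ forces $p$ to have length $\delta$, and then $\delta$-chordality forces the two endpoints of $p$ to be \emph{adjacent} in $H'$ (otherwise $p$ together with a shortest $H'$-path between its endpoints is a chordless cycle of length $>\delta+1$). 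Thus $H$ is $H'$ with an ear glued along an edge, so $K_4$-minor-freeness follows by induction. This structural step is exactly the idea you are missing, and note that it in fact already yields $(1)\Rightarrow(3)$.

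For $(2)\Rightarrow(3)$ your approach via $2$-separations with a virtual edge is genuinely different from the paper's. The paper instead uses that treewidth $\le 2$ guarantees a vertex of degree $2$, deletes its two incident edges, and locates an attached $(\delta+1)$-cycle inside a $2$-connected component of the remainder, arguing that it stays attached in $H$ ``as otherwise a $K_4$ minor would appear''. Your worry about the virtual edge is well-founded: when $uv\notin E(H)$, adding $uv$ to $H_2$ can create new chordless cycles of the wrong length (think of a theta graph with three length-$2$ paths for $\delta=3$), so the inductive hypothesis need not apply to $H_2\cup\{uv\}$. You would have to show that one can always choose the separation so that $uv$ is an \emph{actual} edge of $H$---equivalently, that some ear is attached at an edge---and that is precisely the content of the paper's structural argument above. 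So the obstacle you flag is real and is not bypassed by your outline; the paper's route through the maximal $2$-connected induced subgraph is what supplies the missing leverage.
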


\begin{proof}
$(1)\Rightarrow(2)$ Consider a chordless cycle $C$. It is an indecomposable flat in $H$. We have $(\delta-1)|C|+1=\delta(|C|-1)$. This implies $|C|=\delta+1$. 

We now prove by induction on the number of vertices of $H$ that it is $K_4$-minor free. When $H$ is a single edge it is trivial. Let $H'$ be inclusion maximal proper $2$-connected induced subgraph. Clearly, $H'$ satisfies $(\clubsuit)_{\delta}$, hence by induction $H'$ is $K_4$-minor free. There exists an edge $e_1\in E(G)\setminus E(G')$, that has one vertex in $H'$. As $H$ is $2$-connected we may pick a shortest path $p=(e_1,\dots,e_q)$ in $H$, that finishes also with a vertex in $H'$. As $H'$ union $p$ is $2$-connected induced subgraph of $H$, larger than $H'$, it cannot be proper. Thus, $H$ is $H'$ union $p$. Applying $(\clubsuit)_{\delta}$ to $H'$ and $H$ we obtain $q=\delta$. As $H$ is $\delta$-chordal we see that it is obtained from $H'$ by attaching path $p$ to an edge. In particular, since $H'$ is $K_4$-minor free, so is $H$.	
	
$(2)\Rightarrow (3)$ We prove by induction on the number of edges that if $H$ is $2$-connected, $\delta$-chordal, and has no $K_4$ minor, then either $H=K_2$ or $H$ contains an attached $(\delta+1)$-cycle. The graph $H$ has no $K_4$ minor, so it has treewidth at most $2$. Thus, $H$ has a vertex of degree at most $2$. If $H\neq K_2$, then since $H$ is $2$-connected it has a vertex of degree $2$. Let $e,f$ be edges incident to such a vertex. Consider a $2$-connected component of $H\setminus\{e,f\}$. It satisfies inductive assumptions, so it contains an attached $(\delta+1)$-cycle to an edge. This is also an attached $(\delta+1)$-cycle to an edge in $H$, as otherwise minor $K_4$ would appear.
	
$(3)\Rightarrow (1)$ When $H$ is constructed from $H'$ by attaching a $(\delta+1)$-cycle $C$ to edge $e$, there are two types of indecomposable flats, that is $2$-connected induced subgraphs: $2$-connected induced subgraphs of $H'$, and $2$-connected induced subgraphs of $H'$ containing $e$ together with the whole cycle $C$. In both cases equalities $(\clubsuit)_{\delta}$ hold.
\end{proof}

\section{Graphical translation of $\delta$-Gorenstein property for polytope $B(M(G))$}\label{3}

\begin{theorem}\label{TranslationB}
	Fix a positive integer $\delta$. Let $G=(V,E)$ be a $2$-connected graph. The polytope $B(M(G))$ is $\delta$-Gorenstein if and only if 
	$G$ possess the weight function $w:E\rightarrow\{1,\delta-1\}$ defined by
	
	$w(e) = \left\{ \begin{array}{rcl} 
	1 & \mbox{if} & G\setminus e\text{ is }2\text{-connected,} \\
	\delta-1 & \mbox{if} & G/e\text{ is }2\text{-connected,}
	\end{array}\right.$ \newline
	and the following equalities $(\spadesuit)_{\delta}$ are satisfied:
	\begin{enumerate}
		 \item $w(E)=\delta(\lvert V\rvert-1)$, and \emph{(recall the notion $w(E)=\sum_{e\in E} w(e)$)}
		 \item $w(E(S))+1=\delta(\lvert S\rvert-1)$ for every \emph{good flat} $S\subset V$, i.e.~a set such that both: restriction of $G$ to $S$ and contraction of $E(S)$ in $G$ are $2$-connected.
	\end{enumerate}
\end{theorem}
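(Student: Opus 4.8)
The plan is to make the $\delta$-Gorenstein condition of Definition~\ref{def:charGor} completely explicit for $P=B(M(G))$ by reading off the facets of $P$ --- equivalently the supporting hyperplanes of the cone over $P$ --- together with their reduced equations, and then comparing the requirement ``$h(v)=1$ for every reduced equation $h$'' with the data $w$ and $(\spadesuit)_\delta$. Note first that $P$ is a lattice polytope of dimension $|E|-1$ lying in the hyperplane $\{\sum_{e\in E}x_e=|V|-1\}$, and that the lattice $P$ spans affinely is exactly $\Lambda=\{x\in\ZZ^E:\sum_e x_e=|V|-1\}$ (standard; it follows e.g.\ from the fact that the edge directions of $P$ are the primitive vectors $e_i-e_j$, cf.~\cite{GGMS87}). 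In particular any lattice point $v\in\delta P$ automatically satisfies $\sum_e v_e=\delta(|V|-1)$, which is $(\spadesuit)_\delta$(1), and all reduced equations below are taken with respect to $\Lambda$.

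The core step is to invoke the facet description of the base polytope of a connected matroid. Its facets are of two kinds: (i) hyperplanes $\{x_e=0\}$, one for each $e$ with $M(G)\setminus e$ connected, i.e.\ with $G\setminus e$ $2$-connected; and (ii) hyperplanes $\{\sum_{e\in E(S)}x_e=|S|-1\}$, one for each proper flat of $M(G)$ that is connected with connected contraction, which for a graphic matroid means precisely a proper good flat $S\subsetneq V$ --- the special case $|S|=2$ being the facet $\{x_e=1\}$ for an edge $e$ with $G/e$ $2$-connected. One then checks that the reduced equation of $\{x_e=0\}$ is $x_e$ itself, and that of $\{\sum_{e\in E(S)}x_e=|S|-1\}$ is $(|S|-1)-\sum_{e\in E(S)}x_e$; surjectivity onto $\ZZ$ is witnessed by suitable spanning trees, e.g.\ for the second form by a spanning tree meeting $E(S)$ in only $|S|-2$ edges, obtained by extending a two-component spanning forest of $G[S]$ through $V\setminus S$ (possible since $G/E(S)$ is connected). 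I would also record the elementary lemma that \emph{for a $2$-connected graph $G$ and any edge $e$, at least one of $G\setminus e$, $G/e$ is $2$-connected}: if $c$ were a cutvertex of $G/e$ different from the contracted vertex, then $(G-c)/e$ would be disconnected although $G-c$ is connected, which is absurd; hence the only candidate cutvertex of $G/e$ is the contracted one, and in that case a direct inspection (using that $G$ has no bridges) shows $G\setminus e$ is $2$-connected.

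Given all this, both implications are bookkeeping. For the forward direction, let $v\in\delta P$ be a lattice point witnessing the $\delta$-Gorenstein property. Evaluating the reduced equations of the facets of type (i) and of the $|S|=2$ facets of type (ii) gives $v_e=1$ whenever $G\setminus e$ is $2$-connected and $v_e=\delta-1$ whenever $G/e$ is $2$-connected; by the lemma every edge lies in at least one case, so $v$ is determined, and if some edge lay in both then $\delta=2$ and the two values coincide. Thus $w$ is a well-defined function and $v=(w(e))_{e\in E}$; evaluating the reduced equations of the remaining type (ii) facets then yields $w(E(S))+1=\delta(|S|-1)$ for every proper good flat $S\subsetneq V$, which together with $(\spadesuit)_\delta$(1) is exactly $(\spadesuit)_\delta$. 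Conversely, given a well-defined $w\colon E\to\{1,\delta-1\}$ obeying $(\spadesuit)_\delta$, put $v:=(w(e))_{e\in E}$; then $\sum_e v_e=w(E)=\delta(|V|-1)$ places $v$ at level $\delta$ in $\Lambda$, and the same computations run backwards show that every reduced facet equation takes the value $1$ at $v$. Hence $v$ lies in the relative interior of $\delta P$ and certifies that $P$ is $\delta$-Gorenstein.

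The genuinely nontrivial step is the second one: determining \emph{all} facets of $B(M(G))$ and the exact shape of their reduced equations with respect to $\Lambda$ --- a single facet with a reduced equation having some coefficient $>1$ would derail the whole translation. The graph-theoretic lemma is easy and the two implications are then routine; the care is needed in (a) verifying that the list of hyperplanes above is complete and irredundant, and (b) checking that the two displayed linear forms really map $\Lambda$ onto $\ZZ$, which is where one must exhibit the appropriate spanning trees.
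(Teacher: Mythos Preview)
Your proposal is correct and follows essentially the same approach as the paper: both invoke the known facet description of $B(M)$ for a connected matroid (the paper cites \cite{Ki10,FS05} as Lemma~\ref{lem:scianyBM} and then specializes it to graphic matroids in Corollary~\ref{cor:scianyBG}), identify the Gorenstein witness $v$ with the weight vector $w$, and read off $(\spadesuit)_\delta$ from the condition $h(v)=1$. You are in fact somewhat more careful than the paper on two points it leaves implicit: you explicitly verify that the facet equations are \emph{reduced} on the affine lattice $\Lambda$ (the paper simply asserts this), and you argue the auxiliary lemma that at least one of $G\setminus e$, $G/e$ is $2$-connected via cutvertex analysis, whereas the paper argues it by noting that if $G\setminus e$ is not $2$-connected then $e$ is not a chord of any cycle, so contraction of $e$ preserves $2$-connectivity.
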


\begin{proof}
	 Recall the general matroid base polytope facets description.
	
	\begin{lemma}[\cite{Ki10,FS05}]\label{lem:scianyBM}
		Let $M$ be a connected matroid on the ground set $E$ with the rank function $r$. Then, the base polytope $B(M)$ is full dimensional in an affine sublattice of $\Z^E$ given by $\sum_{e\in E} x_e=r$, and all supporting hyperplanes (ergo facets) of $B(M)$ are of one of the following two types:
		\begin{enumerate}
			\item $x_e\geq 0$, if $M\setminus\{e\}$ is connected,
			\item $\sum_{e\in F} x_e\leq \frac{r(F)}{r(E)}\sum_{e\in E} x_e$, where $F\subset E$ is a \emph{good flat} -- a flat such that both: restriction of $M$ to $F$ and contraction of $F$ in $M$ are connected.
		\end{enumerate}
	\end{lemma}
	
	We extract from Lemma \ref{lem:scianyBM} the facet presentation of the polytope $B(M(G))$.
	
	\begin{corollary}\label{cor:scianyBG}
		Let $G=(V,E)$ be a $2$-connected graph. The polytope $B(M(G))$ has two types of supporting hyperplanes (ergo facets):
		\begin{enumerate}
			\item $x_e\geq 0$, if $G\setminus e$ is $2$-connected,
			\item $\sum_{e\in G|_{S}} x_e\leq \frac{|S|-1}{|V|-1}\sum_{e\in E} x_e$, where $S\subset V$ is a \emph{good flat} -- a flat such that both: restriction of $G$ to $S$ and contraction of $E(S)$ in $G$ are $2$-connected.
		\end{enumerate}
	\end{corollary}

\begin{proof}	
	Inequalities $(1)$ correspond directly to those from Lemma \ref{lem:scianyBM}. 
	
	For $(2)$ we need to show a correspondence between good flats in a graphic matroid and good flats in a graph. Suppose that a set of edges $E'$ forms a good flat in a graphic matroid. Let $S$ be the set of all endpoints of edges from $E'$. As the restriction to the flat is a connected matroid, the pair $(S,E')$ is a $2$-connected graph. In particular, it is connected. Since $E'$ is a flat, it becomes clear that any edge of $G$ between vertices of $S$ is in $E'$. Thus, restriction of $G$ to $S$ and contraction of $E(S)$ in $G$ are $2$-connected -- $S$ is a good flat in $G$. The converse is clear.
\end{proof}

Now, we proceed to the proof of Theorem \ref{TranslationB}. Weight function $w$ is supposed to correspond to lattice point $v$ from the Definition \ref{def:charGor}. Namely, if weight function $w$ exists, then (by Corollary \ref{cor:scianyBG}) properties of $w$ together with equalities $(\spadesuit)_{\delta}$ guarantee that $v\in\delta P$. Further, we claim that both inequalities in Corollary \ref{cor:scianyBG} provide \emph{reduced} equations of the facets. The equalities $(\spadesuit)_{\delta}$ imply that for every supporting hyperplane of the cone over $P$, its reduced equation $h$ satisfies $h(v)=1$. Therefore, the polytope $B(M(G))$ is $\delta$-Gorenstein.

Conversely, suppose the polytope $B(M(G))$ is $\delta$-Gorenstein. If $G\setminus e$ is $2$-connected, then by Corollary \ref{cor:scianyBG} $x_e\geq 0$ is a supporting hyperplane, and hence $v_e=1$. Otherwise, 
there is no cycle in which $e$ is a chord, so contraction of $e$ does not influence cycles (cycle $C$ becomes a cycle $C\setminus e$), so $G/e$ is $2$-connected. Then, $e$ is a good flat, and (by Corollary \ref{cor:scianyBG}) $x_e\leq \frac{|S|-1}{|V|-1}\sum_{e\in E} x_e$ is a supporting hyperplane, and hence $1+v_e=\frac{1}{|V|-1}\sum_{e\in E} v_e=\frac{1}{|V|-1}(|V|-1)\delta$, so $v_e=\delta-1$. Hence, weight function $w:=v$ exists, and (by Corollary \ref{cor:scianyBG}) properties of $v$ imply equalities $(\spadesuit)_{\delta}$.
\end{proof}

\begin{theorem}\label{TranslationB2}
	Fix a positive integer $\delta$. Let $G=(V,E)$ be a $2$-connected graph. The polytope $B(M(G))$ is $\delta$-Gorenstein if and only if the equality $(\heartsuit)_{\delta}$ is satisfied:
$$w(E(S))+k(S)=\delta(\lvert S\rvert-1)\text{ for every }2\text{-connected set }S\subset V,$$
	where $w:E\rightarrow\{1,\delta-1\}$ is the weight function defined in Theorem \ref{TranslationB}, and $k(S)$ is the number of $2$-connected components in $G/G(S)$ (notice that $k(V)=0$).
\end{theorem}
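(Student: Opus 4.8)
The strategy is to deduce Theorem~\ref{TranslationB2} from Theorem~\ref{TranslationB}. By the latter, $B(M(G))$ is $\delta$-Gorenstein exactly when $G$ possesses the weight function $w$ and the equalities $(\spadesuit)_{\delta}$ hold; since the statement of $(\heartsuit)_{\delta}$ also refers to this $w$, it suffices to show that, whenever $w$ is well defined, the conditions $(\spadesuit)_{\delta}$ and $(\heartsuit)_{\delta}$ are equivalent. One direction is immediate: the set $V$ is $2$-connected with $k(V)=0$, so the instance $S=V$ of $(\heartsuit)_{\delta}$ is the first part of $(\spadesuit)_{\delta}$; and a good flat $S$ is a $2$-connected set with $G/G(S)$ $2$-connected, hence with $k(S)=1$, so the instance of $(\heartsuit)_{\delta}$ at such an $S$ is the second part of $(\spadesuit)_{\delta}$. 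The substance of the theorem is the reverse implication $(\spadesuit)_{\delta}\Rightarrow(\heartsuit)_{\delta}$, that the two special families of equations force the single identity $(\heartsuit)_{\delta}$ at \emph{every} $2$-connected $S$.

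I would prove $(\spadesuit)_{\delta}\Rightarrow(\heartsuit)_{\delta}$ by induction on $|V|-|S|$. When $S=V$ the claim is the first part of $(\spadesuit)_{\delta}$. For the inductive step let $S\subsetneq V$ be $2$-connected and set $T:=G/G(S)$, with $*$ the vertex to which $S$ is contracted; then $T$ is connected with at least two vertices. The crucial observation is that no vertex $u\in V\setminus S$ can be a cut vertex of $T$: otherwise $T-u=(G-u)/G(S)$ would be disconnected, hence $G-u$ would be disconnected, contradicting $2$-connectedness of $G$. Consequently either $T$ is $2$-connected, in which case $S$ is a good flat, $k(S)=1$, and $(\heartsuit)_{\delta}$ at $S$ is just the second part of $(\spadesuit)_{\delta}$; or $*$ is a cut vertex of $T$, which is the only remaining possibility.

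In that remaining case let $C_1,\dots,C_p$ (with $p\ge 2$) be the vertex sets of the connected components of $G-S$; these are also the components of $T-*$. Two structural facts drive the argument. First, each $S\cup C_i$ induces a $2$-connected subgraph of $G$: every edge leaving $C_i$ lands in $S$, $G|_S$ is $2$-connected, and any path witnessing a would-be cut vertex can have each of its excursions into some $C_j$, $j\ne i$, replaced by a detour inside $S$ avoiding that vertex. Second, at the level of edge sets one has $E=E(S)\sqcup E_1\sqcup\dots\sqcup E_p$ with $E_i:=E(S\cup C_i)\setminus E(S)$, since $G-S$ has no edges between distinct components; and at the level of blocks, every block of $T$ lies in exactly one branch $\{*\}\cup C_i$ (a block meeting two branches would have $*$ as an interior cut vertex), so writing $K_i$ for the number of blocks of $T$ inside $\{*\}\cup C_i$ we get $\sum_i K_i=k(S)$, while collapsing the branch $\{*\}\cup C_i$ identifies $G/G(S\cup C_i)$ with $T$ with that branch removed, giving $k(S\cup C_i)=k(S)-K_i$. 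Since $S\subsetneq S\cup C_i\subsetneq V$, the inductive hypothesis applies to each $S\cup C_i$; summing the $p$ equations $w(E(S\cup C_i))+k(S\cup C_i)=\delta(|S\cup C_i|-1)$, inserting $w(E(S\cup C_i))=w(E(S))+w(E_i)$ with $\sum_i w(E_i)=w(E)-w(E(S))$, using $\sum_i k(S\cup C_i)=(p-1)k(S)$ and $\sum_i(|S\cup C_i|-1)=p(|S|-1)+(|V|-|S|)$, and finally substituting $w(E)=\delta(|V|-1)$ from the first part of $(\spadesuit)_{\delta}$, all terms cancel except an overall factor $p-1$, leaving $w(E(S))+k(S)=\delta(|S|-1)$, which is $(\heartsuit)_{\delta}$ at $S$.

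The step I expect to require the most care is the pair of structural facts above: confirming that $S\cup C_i$ is again a $2$-connected set, so the induction may be run with the \emph{same} weight function $w$ of $G$ rather than with weight functions of subgraphs, and the block bookkeeping $\sum_i K_i=k(S)$ and $k(S\cup C_i)=k(S)-K_i$, which rests on $*$ being a cut vertex of $T$. Minor attention is also due to degenerate small graphs and to the convention for $2$-connectedness of the multigraph $T$ (blocks consisting of parallel edges), but these do not affect the argument.
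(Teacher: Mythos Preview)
Your argument is correct, but it follows a different route from the paper. The paper proves $(\spadesuit)_\delta\Rightarrow(\heartsuit)_\delta$ \emph{directly}, without induction: given a $2$-connected $S$, it lets $C_1,\dots,C_k$ be the $2$-connected components (blocks) of $G/G(S)$, observes that each $V\setminus(C_i\setminus S)$ is a \emph{good flat} of $G$, applies the second equation of $(\spadesuit)_\delta$ to each of these $k$ sets, and sums; after substituting $w(E)=\delta(|V|-1)$ the identity $(\heartsuit)_\delta$ drops out in two lines. By contrast, you induct on $|V|-|S|$, work with the \emph{connected} components $C_1,\dots,C_p$ of $G-S$ rather than the blocks of $G/G(S)$, show each $S\cup C_i$ is again $2$-connected, and feed these back into the inductive hypothesis $(\heartsuit)_\delta$ itself. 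The paper's approach is shorter and sidesteps all of your block bookkeeping ($\sum_i K_i=k(S)$, $k(S\cup C_i)=k(S)-K_i$), at the price of checking once that $V\setminus(C_i\setminus S)$ is a good flat; your approach trades that single verification for an induction and the two structural facts you flagged, both of which are indeed routine. Either way the algebra collapses to the same cancellation against $w(E)=\delta(|V|-1)$.
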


\begin{proof}
	The implication $\Leftarrow$ is straightforward as $S=V$ gives the first equality of $(\spadesuit)_{\delta}$ and for any good flat $S$ we obtain the second equality of $(\spadesuit)_{\delta}$ with $k(S)=1$.
	
	For the implication $\Rightarrow$ let $C_1,\dots,C_k$ be $2$-connected components in $G/G(S)$ (where $k=k(S)$). Sets $V\setminus (C_i\setminus S)$ are good flats in $G$, thus
	$$w(E(V\setminus (C_i\setminus S)))+1=\delta(\lvert V\setminus (C_i\setminus S)\rvert-1).$$
	Summing up
	$$(k-1)\cdot w(E)+w(E(S))+k=\delta((k-1)\lvert V\rvert+\lvert S\rvert-(k-1)-1),$$
	$$w(E(S))+k=\delta(\lvert S\rvert-1).$$
\end{proof}

\section{Characterization of $\delta$-Gorenstein polytopes $B(M(G))$ for $\delta>2$}\label{4}

Let $\delta>2$ be fixed. The following two propositions show how to construct new Gorenstein graphs from those already known.

\begin{proposition}\label{construction1}
	Suppose $G_1,\dots,G_{\delta-1}$ are $2$-connected graphs satisfying equalities $(\spadesuit)_{\delta}$ from Theorem \ref{TranslationB}. Let $e_1,\dots,e_{\delta-1}$ be edges of the corresponding graphs with weights equal to $\delta-1$. Then, the \emph{glueing of $G_1,\dots,G_{\delta-1}$ along $e_1,\dots,e_{\delta-1}$}, that is a graph $G$ which is a disjoint union of $G_1,\dots,G_{\delta-1}$ with edges $e_1,\dots,e_{\delta-1}$ unified to a distinguished edge $e$, satisfies equalities $(\spadesuit)_{\delta}$ (and the weight of $e$ is equal to $1$). 
\end{proposition}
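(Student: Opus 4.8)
The plan is to verify the two equalities $(\spadesuit)_\delta$ of Theorem~\ref{TranslationB} directly for the glued graph $G$, using the fact that they hold for each $G_i$. First I would pin down the combinatorics of the weight function on $G$. The edge $e$ obtained by unifying $e_1,\dots,e_{\delta-1}$ lies in $G\setminus e$ which is $2$-connected (it contains each $G_i\setminus e_i$ sharing both endpoints of $e$, so deleting $e$ leaves a $2$-connected graph), hence $w(e)=1$; every other edge $f$ of $G$ belongs to a unique $G_i$, and one checks that $G\setminus f$ (resp.\ $G/f$) is $2$-connected in $G$ if and only if $G_i\setminus f$ (resp.\ $G_i/f$) is $2$-connected in $G_i$, because the only interaction between the $G_i$'s happens through the edge $e$, which is never affected. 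So the weight function on $G$ restricts to the weight function on each $G_i$ on its non-$e$ edges, and $w_G(e)=1$ while $w_{G_i}(e_i)=\delta-1$.

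Next I would handle equality (1) of $(\spadesuit)_\delta$. Writing $V_i$ for the vertex set of $G_i$ and $V$ for that of $G$, the glueing identifies the two endpoints of $e_i$ across all copies, so $|V| = \sum_{i=1}^{\delta-1}(|V_i|-2)+2$. On the weights side, $w(E) = \sum_{i=1}^{\delta-1}\bigl(w_{G_i}(E(G_i)) - w_{G_i}(e_i)\bigr) + w_G(e) = \sum_{i=1}^{\delta-1}\bigl(\delta(|V_i|-1)-(\delta-1)\bigr)+1$. A short calculation reduces this to $\delta\bigl(\sum_i(|V_i|-1)-(\delta-1)\bigr)+\delta-1 - (\delta-1)\cdot\delta + \cdots$; keeping track carefully one gets $w(E)=\delta(|V|-1)$, using $|V|-1 = \sum_i(|V_i|-1) - (\delta-1)$. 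This is the routine part.

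The more delicate step is equality (2): classifying good flats $S$ of $G$. I expect two cases. If $S$ does not contain both endpoints of $e$, then $E(S)$ involves at most one of the $G_i$'s in an essential way — more precisely, I claim $S\subseteq V_i$ for a single $i$ (if $S$ met two components only at endpoints of $e$ but not both of them, the induced subgraph on $S$ would be disconnected or fail $2$-connectedness), and then $S$ is a good flat of $G_i$, so $(\spadesuit)_\delta$(2) for $G_i$ gives exactly the desired equality since $w$ agrees on $E(S)$. If $S$ does contain both endpoints $u,v$ of $e$, then $S\cap V_i$ induces a subgraph of $G_i$ containing $u,v$; I would argue each such $S\cap V_i$ is either a good flat of $G_i$ or all of $V_i$ (the "good flat" condition on $S$ in $G$, together with $2$-connectedness, forces each piece to be $2$-connected with $2$-connected contraction inside $G_i$), and then sum the corresponding equalities — equality (1) for those $G_i$ with $S\cap V_i=V_i$, equality (2) for the rest — subtracting the overcounted contributions of the shared edge $e$ and the shared vertices $u,v$, exactly as in the telescoping computation in the proof of Theorem~\ref{TranslationB2}. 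The bookkeeping of how many times $w(e)=1$, the $+1$'s, and the two shared vertices are counted is where care is needed; the main obstacle is proving the structural claim that every good flat of $G$ decomposes along the $G_i$ in one of these two controlled ways, for which I would use that $e$ is a cut-edge-like bottleneck: any $2$-connected induced subgraph of $G$ either lives inside a single $G_i$ or contains $e$ entirely and meets each $G_i$ it touches in a subgraph through both endpoints of $e$.
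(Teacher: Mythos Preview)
Your outline matches the paper's proof in structure---verify $w(e)=1$, check equality (1), then split good flats $S$ into the cases ``at most one endpoint of $e$ in $S$'' versus ``both endpoints in $S$''---but there is a real gap in the second case. You assert that each $S\cap V_i$ is either all of $V_i$ or a good flat of $G_i$, and then propose to sum equality (1) over the full pieces and equality (2) over the proper ones. If $k$ of the pieces are proper good flats, your sum (after the bookkeeping you allude to) gives
\[
w(E(S))+1=\delta(|S|-1)-(k-1),
\]
which is the desired equality only when $k=1$. You never argue that $k=1$, and your phrasing ``equality (2) for the rest'' suggests you expect several proper pieces may occur.

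The paper's proof supplies exactly this missing constraint: if two distinct $G_{j_1},G_{j_2}$ were not fully contained in $S$, then after contracting $S$ the vertex to which $S$ collapses would separate the leftover parts of $G_{j_1}$ and $G_{j_2}$, contradicting $2$-connectedness of $G/G|_S$. Hence all but one $G_i$ lie entirely in $S$, and $S\cap V_{\delta-1}$ (say) is a single good flat $F$ of $G_{\delta-1}$. Your own parenthetical (``forces each piece to be $2$-connected with $2$-connected contraction inside $G_i$'') is close to this, but if you actually try to verify the contraction condition piecewise you will discover it fails whenever two pieces are proper---so the argument you sketch does not establish what you need, and in fact proves the stronger statement you omitted. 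Once $k=1$ is in hand, the telescoping computation is exactly the one the paper writes out.
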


\begin{proof}
	The first equality $(\spadesuit)_{\delta}$ is easy to check.
	
	For the second, let $S$ be a good flat in $G$. 
	
	If $S$ is disjoint from the endpoints of $e$, then since $G_{|S}$ is connected, $S$ must be contained in some $G_j$. Further, $G_{|S}=G_{j|S}$ is $2$-connected. If the contraction of $S$ in $G_j$ would lead to a separating vertex, it would also be the case in $G$. Hence, $S$ is a good flat in $G_j$ and satisfies the second equality $(\spadesuit)_{\delta}$. 
	
	The same argument works if $S$ contains just one endpoint $v_1$ of $e$. Indeed, if $S$ was not contained in some $G_j$, then $v_1$ would be a cut vertex of $G_{|S}$.
	
	In the remaining case $e$ is an edge of $G_{|S}$. In particular, contraction of $S$ also contracts $e$. If there were two parts $G_{j_1}$ and $G_{j_2}$ not fully contained in $S$, then contraction of $S$ would be a separating vertex. Thus, we may assume $S$ contains all vertices of $G_1,\dots, G_{\delta-2}$. Contraction of $S$ contracts also these parts, and hence $S\cap V(G_{\delta-1})$ must be a good flat $F$ of $G_{\delta-1}$. We obtain:
	
	$$w(E(S))+1=$$
	$$=\sum_{i=1}^{\delta-2}(w_i(E(G_i))-w_i(e))+(w_{\delta-1}(F)-w_{\delta-1}(e))+w(e)+1=$$
	$$=\sum_{i=1}^{\delta-2}w_i(E(G_i))+w_{\delta-1}(F)-(\delta-1)^2+1+1=$$
	$$=\delta\sum_{i=1}^{\delta-2}(|V(G_i)|-1)+\delta(|F|-1)-(\delta-1)^2+1=$$
	$$=\delta(|S|-1)+\delta+2\delta(\delta-2)-\delta(\delta-1)-(\delta-1)^2+1=$$
	$$=\delta(|S|-1)$$
\end{proof}

\begin{proposition}\label{construction2}
	Suppose $G$ is a $2$-connected graph satisfying equalities $(\spadesuit)_{\delta}$ from Theorem \ref{TranslationB}. Let $e$ be an edge with weight equal to $1$. Then, the \emph{$(\delta-1)$-subdivision of $e$}, that is a graph $G'$ equal to $G$ with $e$ replaced by a path $e_1,\dots,e_{\delta-1}$, satisfies equalities $(\spadesuit)_{\delta}$ (and the weight of each $e_i$ is equal to $\delta-1$). 
\end{proposition}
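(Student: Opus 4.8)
The plan is to verify that the $(\delta-1)$-subdivision $G'$ of an edge $e$ of weight $1$ again satisfies the two equalities $(\spadesuit)_\delta$ of Theorem \ref{TranslationB}. First I would check that the weight function on $G'$ is as claimed: each new edge $e_i$ of the path replacing $e$ lies on exactly the cycles that $e$ lay on (the subdivision only lengthens those cycles), so $G'\setminus e_i$ is not $2$-connected but $G'/e_i$ is, forcing $w(e_i)=\delta-1$; every other edge keeps its weight since $2$-connectedness of $G\setminus f$ or $G/f$ is unchanged by subdividing a different edge. For the first equality $(\spadesuit)_\delta(1)$ the computation is immediate: passing from $G$ to $G'$ adds $\delta-2$ vertices, removes the term $w(e)=1$ from $w(E)$, and adds $(\delta-1)\cdot(\delta-1)$ for the new path, so $w(E')=w(E)-1+(\delta-1)^2=\delta(|V|-1)+\delta^2-2\delta=\delta(|V|+\delta-2-1)=\delta(|V'|-1)$.

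The heart of the argument is the second equality, applied to an arbitrary good flat $S$ of $G'$. I would split into cases according to how $S$ meets the new path $P=(e_1,\dots,e_{\delta-1})$ with endpoints the two old endpoints $v_1,v_2$ of $e$ and internal vertices $u_1,\dots,u_{\delta-2}$. If $S$ contains no internal vertex $u_j$, then $S\subseteq V(G)$ and, because no $u_j$ is present, the induced subgraph $G'|_S$ and the contraction $G'/G'(S)$ are identified with $G|_S$ and $G/G(S)$ respectively (the path $P$ either sits entirely inside the contracted part when $\{v_1,v_2\}\subseteq S$, in which case in $G$ the edge $e$ is also contracted, or it sits outside, contributing nothing); so $S$ is a good flat of $G$ and the equality for $G$ gives exactly the equality for $G'$. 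If $S$ contains some internal vertex $u_j$, then since $G'|_S$ is $2$-connected and the $u_j$'s have degree $2$ in $G'$, the only way a degree-$2$ vertex can sit in a $2$-connected induced subgraph is if both its neighbours along $P$ are in $S$; propagating this forces the entire path $P$ together with $v_1,v_2$ into $S$. Writing $S=S_0\cup\{u_1,\dots,u_{\delta-2}\}$ with $S_0=S\cap V(G)\supseteq\{v_1,v_2\}$, I would argue $S_0$ is a good flat of $G$ (it is $2$-connected because collapsing $P$ inside $G'|_S$ yields $G|_{S_0}$, and the contraction is unaffected since $e\in E(S_0)$ gets contracted just as $P$ does), and then
\begin{align*}
w(E'(S))+1 &= \bigl(w(E(S_0))-w(e)+(\delta-1)^2\bigr)+1\\
&= w(E(S_0))+\delta^2-2\delta = \delta(|S_0|-1)+\delta^2-2\delta\\
&= \delta\bigl(|S_0|+\delta-2-1\bigr) = \delta(|S|-1),
\end{align*}
using $|S|=|S_0|+\delta-2$.

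The main obstacle I anticipate is the case analysis for how $S$ interacts with $P$ — in particular, arguing cleanly that an internal subdivision vertex in a good flat forces the whole path in, and checking that ``good flat'' status (the simultaneous $2$-connectivity of restriction \emph{and} contraction) is correctly transported back and forth between $G$ and $G'$ in every subcase, including the borderline situation where $S$ contains exactly one of $v_1,v_2$ but no internal vertex. Once those structural facts are pinned down, the arithmetic is routine. It may also be cleaner to phrase the whole proof through the reformulation $(\heartsuit)_\delta$ of Theorem \ref{TranslationB2}, tracking how the number $k(S)$ of $2$-connected components of the contraction changes (it does not, since $P$ contracts to the same vertex identification as $e$), which would collapse the several cases above into a single bookkeeping identity.
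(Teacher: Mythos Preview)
Your approach matches the paper's, and your handling of the two ``generic'' cases (no endpoint of $e$ in $S$; all of $P$ forced into $S$) is fine. But there is a real gap in your first case when $\{v_1,v_2\}\subseteq S$ and $S$ contains no internal subdivision vertex. In that situation your parenthetical claim that $G'|_S$ and $G'/G'(S)$ coincide with $G|_S$ and $G/G(S)$ is false. Since $e$ has been removed from $G'$, one has $G'|_S=G|_S\setminus e$, so $w_{G'}(E(S))=w_G(E(S))-1$ and you cannot simply quote the good-flat equality for $S$ in $G$. Worse, in $G'/G'(S)$ the internal vertices $u_1,\dots,u_{\delta-2}$ are \emph{not} contracted away: the path $P$ becomes a cycle hanging off the single contracted vertex, which is therefore a cut vertex unless there is nothing else in the graph. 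Hence $S$ is a good flat of $G'$ in this subcase only when $S=V(G)$; then $w_{G'}(E(S))+1=w_G(E(G))-w(e)+1=w_G(E(G))=\delta(|V(G)|-1)=\delta(|S|-1)$, using equality~(1) of $(\spadesuit)_\delta$ for $G$ rather than the good-flat equality. This is exactly the extra subcase the paper singles out. Your ``borderline'' worry (exactly one of $v_1,v_2$ in $S$) is actually harmless; the genuinely delicate subcase is the one with both endpoints but no internal vertices.

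A minor point: in your displayed computation two arithmetic slips cancel. From line~1 to line~2 you should get $w(E(S_0))+(\delta-1)^2=w(E(S_0))+\delta^2-2\delta+1$, and then $w(E(S_0))=\delta(|S_0|-1)-1$ (not $\delta(|S_0|-1)$) from the good-flat equality for $S_0$; the two missing $1$'s offset and the final line is correct.
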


\begin{proof}
	It is straightforward to check the first equality in $(\spadesuit)_{\delta}$.
	
	For the second, let $S$ be a good flat in $G'$. 
	
	If $S$ does not contain the endpoints of $e$ then, as $G'_{|S}$ is connected, it cannot contain any of the vertices on the added path. In such a case, $S$ may be identified with a good flat of $G$ and the second equality in $(\spadesuit)_{\delta}$ follows.
	
	The same argument applies when $S$ contains precisely one endpoint of $e$. Indeed, then it cannot contain any of the other vertices on the added path, as $G'_{|S}$ is $2$-connected. 
	
	Suppose now that both endpoints of $e$ belong to $S$. By $2$-connectivity of $G'_{|S}$ we have only the following two cases.
\begin{itemize}	
\item	$S$ contains all vertices on the added path. By forgetting the added vertices we have an induced good flat $S'$ in $G$. We have:
	$$w(E(S))+1=w(E(S'))+(\delta-1)^2=\delta(|S'|-1)-1+(\delta-1)^2=\delta(|S|-1).$$	
\item	$S$ contains no inner vertices on the added path. Since $G/S$ is $2$-connected, $S$ must contain all other vertices. In particular $|S|=|V(G)|$. We have:
	$$w(E(S))+1=w(E(G))=\delta(|S|-1).$$
	\end{itemize} 
\end{proof}

\begin{theorem}\label{thm:d>2}
	Let $G$ be a $2$-connected graph and let $\delta>2$. The following conditions are equivalent:
	\begin{enumerate}
		\item $G$ satisfies equalities $(\spadesuit)_{\delta}$ from Theorem \ref{TranslationB},
		\item $G$ can be obtained using constructions described in \ref{construction1}, \ref{construction2} from a $\delta$-cycle.
	\end{enumerate} 
\end{theorem}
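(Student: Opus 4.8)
The plan is to prove the equivalence by showing both implications, with the forward direction $(1)\Rightarrow(2)$ being the substantial one. The backward direction $(2)\Rightarrow(1)$ is essentially already done: a $\delta$-cycle satisfies $(\spadesuit)_\delta$ (direct check: every edge has weight $\delta-1$ since contracting any edge of a cycle leaves a shorter cycle, which is $2$-connected, so $w(E)=\delta\cdot\delta = \delta(|V|-1)$ as $|V|=\delta$; and the only good flats are vertex sets inducing the whole cycle, handled by the first equality), and Propositions \ref{construction1} and \ref{construction2} precisely say that the constructions preserve $(\spadesuit)_\delta$. So the bulk of the argument is $(1)\Rightarrow(2)$.

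For $(1)\Rightarrow(2)$ I would induct on the number of edges (or vertices) of $G$. The base case is when $G$ is small; one must identify that the minimal $2$-connected graph satisfying $(\spadesuit)_\delta$ is exactly the $\delta$-cycle. The inductive step is a structural analysis: given a $2$-connected $G$ satisfying $(\spadesuit)_\delta$ with the weight function $w:E\to\{1,\delta-1\}$, I want to find either an edge of weight $\delta-1$ sitting in a ``$(\delta-1)$-path'' configuration (so I can reverse Proposition \ref{construction2}, contracting that path back to a single weight-$1$ edge) or a weight-$1$ edge $e$ across which $G$ decomposes as a glueing of $\delta-1$ smaller graphs along parallel copies of $e$ (so I can reverse Proposition \ref{construction1}). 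The key numerical tool is the global equality $w(E)=\delta(|V|-1)$, combined with $(\heartsuit)_\delta$ from Theorem \ref{TranslationB2}, which controls how weights distribute over $2$-connected subgraphs. I expect the argument to run roughly as follows: look at the block-cut structure after removing or contracting carefully chosen edges; use that a weight-$\delta-1$ edge $e$ has $G/e$ 2-connected, so $e$ is a chord of no cycle, meaning $e$ lies on every cycle through its endpoints only as a ``bridge-like'' edge internal to a path — more precisely, its endpoints have the property that one of them has degree $2$ or the path through $e$ is forced; iterate to extract a maximal such path and reverse the subdivision. When no weight-$\delta-1$ edge admits this, argue that some weight-$1$ edge is a ``$2$-cut'' in the strong sense that $G$ minus that edge's endpoints, or rather the structure of good flats through it, forces the glueing decomposition with exactly $\delta-1$ pieces — here the count $\delta-1$ must come out of the first equality of $(\spadesuit)_\delta$ applied to the pieces versus the whole.

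The main obstacle I anticipate is proving that the two reverse constructions suffice, i.e.~that every $2$-connected $G\neq C_\delta$ satisfying $(\spadesuit)_\delta$ admits at least one of the two decompositions. Concretely: one must rule out the possibility of a ``stuck'' graph that satisfies all the equalities but has no weight-$\delta-1$ path to contract and no weight-$1$ edge along which to un-glue. I would attack this by a discharging-type or extremal argument on the weight function: consider a weight-$1$ edge $e$ (one must exist, else every edge has weight $\delta-1$, forcing $|E|=\delta(|V|-1)/(\delta-1)$, and then analyzing $2$-connectivity and chord-freeness of contractions should pin $G$ down to a cycle — this needs $\delta>2$ critically, since for $\delta=2$ the clique $K_4$ intervenes, consistent with the theorem's hypothesis); for such $e$, examine $G\setminus e$ or the good flats containing $e$, and show that if $G$ is not a glueing along $e$ then the structure around the other endpoint yields a contractible weight-$\delta-1$ path elsewhere. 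Finally, I would note that the constructions strictly increase the edge count (Proposition \ref{construction2}) or keep the graph decomposable into strictly smaller pieces (Proposition \ref{construction1}), so the induction is well-founded and terminates at $C_\delta$.

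A subtlety worth flagging in the writeup: one should verify that the weight function produced by the reverse construction on the smaller graph(s) is the \emph{same} $w$ as the one dictated by Theorem \ref{TranslationB} (defined intrinsically by $2$-connectivity of deletions/contractions), so that ``satisfies $(\spadesuit)_\delta$'' is inherited correctly — this is where Corollary \ref{cor:scianyBG} and the dichotomy ``$G\setminus e$ $2$-connected $\Leftrightarrow w(e)=1$, $G/e$ $2$-connected $\Leftrightarrow w(e)=\delta-1$'' (and these are mutually exclusive and exhaustive, as used in the proof of Theorem \ref{TranslationB}) must be invoked on the smaller graphs and compared with the induced structure in $G$.
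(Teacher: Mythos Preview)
Your overall architecture is correct and matches the paper's: $(2)\Rightarrow(1)$ is immediate from Propositions \ref{construction1} and \ref{construction2}, and $(1)\Rightarrow(2)$ proceeds by induction, at each step reversing one of the two constructions. Your treatment of the weight-$1$ case is also essentially what the paper does (its Claim \ref{lem:decompedge1}): if some edge $e$ has $w(e)=1$, one shows $G$ splits as a glueing of exactly $\delta-1$ smaller pieces along $e$, each satisfying $(\spadesuit)_\delta$.

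The genuine gap is your handling of the case where every edge has weight $\delta-1$. You write that in this case ``analyzing $2$-connectivity and chord-freeness of contractions should pin $G$ down to a cycle''. This is false. For instance, for $\delta=3$, glue two triangles along an edge (Proposition \ref{construction1}) and then $2$-subdivide the resulting weight-$1$ edge (Proposition \ref{construction2}): you obtain the theta graph on five vertices with three internally disjoint length-$2$ paths between two hubs, every edge has weight $\delta-1=2$, it satisfies $(\spadesuit)_3$, and it is not a cycle. More generally, any composite of construction~\ref{construction1} followed by construction~\ref{construction2} produces arbitrarily large such graphs. So the all-weight-$(\delta-1)$ case is not a base case; it is the hard case.

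What the paper actually does there is a separate structural analysis via \emph{ears}. One proves (Claim \ref{lem:pathsind-1}) that for an $s$-ear the complement has exactly $\delta-s$ two-connected components, hence $(\delta-1)$-ears are precisely complements of maximal proper $2$-connected induced subgraphs (Claim \ref{cor:del-1path}). One then shows (Claim \ref{lem:ifnot2con}) that if some $(\delta-1)$-ear has non-$2$-connected contraction, replacing it by a single edge reverses construction~\ref{construction2}. The crux is Claim \ref{lem:goodpathexists}: if $G$ is not a $\delta$-cycle then \emph{every} $(\delta-1)$-ear has non-$2$-connected contraction. This in turn requires ruling out a hypothetical ``stuck'' graph in which every edge can be both deleted and contracted while preserving $2$-connectivity (Claim \ref{lem:dziwny}), done by a separate minimal-counterexample argument. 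None of this is visible in your sketch, and your heuristic that a weight-$(\delta-1)$ edge forces a degree-$2$ endpoint or a forced path is not correct without this machinery.
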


\begin{proof}
	It is easy to check that $\delta$-cycle satisfies equalities $(\spadesuit)_{\delta}$. Implication $(2)\Rightarrow (1)$ follows from Propositions \ref{construction1} and \ref{construction2}.
	
	The remaining part of this section is devoted to the proof of the implication $(1)\Rightarrow (2)$. The proof of the theorem and all the following claims is by induction on the size of $V(G)$.
	
	First suppose that $G$ contains an edge with weight $1$. In this case we conclude by the following claim, applying construction inverse to the one from Proposition \ref{construction1}.
	
	\begin{claim}\label{lem:decompedge1}
		Suppose a $2$-connected graph $G$ satisfying equalities $(\spadesuit)_{\delta}$ has an edge $e=uv$ of weight $1$. Then, $G$ is the glueing of graphs $G_1,\dots,G_{\delta-1}$ satisfying equalities $(\spadesuit)_{\delta}$ along edges $e_1,\dots,e_{\delta-1}$, where $G_i$ is a subgraph of $G$ induced on the union of $\{u,v\}$ and a connected component of $G\setminus\{u,v\}$.
	\end{claim}
	
	\begin{proof}
		Let $G_1,\dots,G_k$ be the subgraphs of $G$ inducing the $2$-connected components after the contraction of $e$. Note that subgraphs $G_i$ are $2$-connected, and the number of $2$-connected components after contraction of $G_i$ equals $k(G_i)=k-1$.
		
		By equalities $(\heartsuit)_{\delta}$ from Theorem \ref{TranslationB2} we know that:
		$$w(E(G_i))+k-1=\delta(|V(G_i)|-1).$$
		Summing up we obtain:
		$$\sum_{i=1}^k w(E(G_i))+k(k-1)=\delta(\sum_{i=1}^k|V(G_i)|-k).$$
		Knowing that:
		$$\sum_{i=1}^k w(E(G_i))=w(E(G))+k-1,$$
		$$\sum_{i=1}^k|V(G_i)|)=|V(G)|+2(k-1),$$ 
		$$w(E(G))=\delta(|V(G)|-1)$$
		we get that $k=\delta-1$.
		
		It remains to show that each $G_i$ satisfies equalities $(\spadesuit)_{\delta}$ (note that now $w_i(e)=\delta-1$). The first one follows from $$w(E(G_i))+\delta-2=w_i(E(G_i))=\delta(|V(G_i)|-1).$$ 
		
		For the second equality suppose $S$ is a good flat in $G_i$. If $S$ contains at most one endpoint of $e$, then $S$ is also a good flat in $G$ and the second equality of $(\spadesuit)_{\delta}$ follows. 
		
		Suppose contrary, that is $u,v\in S$. We extend $S$ to a good flat $S'$ in $G$ by adding all vertices not in $G_i$. We have:
		$$w_i(E(S))+1=w(E(S'))+1-(\sum_{j:j\neq i} w_j(E(G_j)))+\delta(\delta-2).$$
		Applying equalities $(\spadesuit)_{\delta}$ this gives:
		$$w_i(E(S))+1=\delta(|S'|-1)-(\sum_{j:j\neq i} \delta(|V(G_j)|-1))+\delta(\delta-2),$$
		$$w_i(E(S))+1=\delta(|S|-1).$$
	\end{proof}
	
	Hence, from now on we may assume that all edges of $G$ have weight $\delta-1$. By \emph{$s$-ear} we mean a path in $G$ of length $s$ whose inner vertices have degree $2$. The following claim characterizes $s$-ears in $G$. 
	
	\begin{claim}\label{lem:pathsind-1}
		Let $G$ be a $2$-connected graph satisfying equalities $(\spadesuit)_{\delta}$ in which the weight of every edge equals $\delta-1$. Let $P=(v_0,\dots,v_s)$ be an $s$-ear in $G$. Then, the number of $2$-connected components of $G\setminus\{v_1,\dots,v_{s-1}\}$ equals $\delta-s$.
	\end{claim}
	
	\begin{proof}
		Let $C_1,\dots, C_k$ be the $2$-connected components of $G\setminus\{v_1,\dots,v_{s-1}\}$. As adding the path $P$ makes the graph $2$-connected, there exists an ordering of components $C_i$ for which:
		\begin{itemize}
			\item if $C_i$ and $C_j$ share a vertex, then $i=j\pm 1$, 
			\item $C_i$ has a common vertex with $C_{i+1}$ and this vertex is unique,
			\item $v_0\in C_1$ and $v_s\in C_k$.
		\end{itemize}
		
		As each $C_i$ is a good flat we have 
		$$(\delta-1)|E(C_i)|+1=\delta(|V(C_i)|-1).$$ 
		Summing up we get:
		$$(\delta-1)\sum_{i=1}^k |E(C_i)|+k=\delta(\sum_{i=1}^k |V(C_i)|-k).$$
		Substituting $|E|-s=\sum_{i=1}^k |E(C_i)|$ and $|V|+k-1-(s-1)=\sum_{i=1}^k |V(C_i)|$ we obtain:
		$$(\delta-1)(|E|-s)+k=\delta(|V|-1-(s-1)).$$
		By the first equality in $(\spadesuit)_{\delta}$ we know that $(\delta-1)|E|=\delta(|V|-1)$, which gives us:
		$$-s(\delta-1)+k=\delta(-s+1).$$
		This is precisely the assertion of the claim.
	\end{proof}
	
	\begin{claim}\label{cor:del-1path}
		Let $G$ be a $2$-connected graph satisfying equalities $(\spadesuit)_{\delta}$ in which the weight of every edge equals $\delta-1$. Then, $(\delta-1)$-ears are complements of inclusion maximal $2$-connected induced proper subgraphs.
	\end{claim}
	
	\begin{proof}
		Clearly, by Claim \ref{lem:pathsind-1} every such path must be a complement of a $2$-connected induced subgraph. Of course, it is not possible to extend such a subgraph to a larger proper $2$-connected subgraph.
		
		For the opposite inclusion assume $B$ is an inclusion maximal $2$-connected induced proper subgraph. Consider a shortest path $P=(v_0,v_1,\dots, v_s)$ in $G$, such that $v_0,v_s\in V(B)$ and $v_i\not\in V(B)$ for $i=1,\dots,s-1$. Such a path exists as $B$ is proper (we may find $v_1$) and $G$ is $2$-connected. 
		
		By maximality of $B$, we must have $V(G)=V(B)\cup\{v_1,\dots,v_{s-1}\}$. $P$ is a degree $2$ path, since if $v_i$ was not of degree $2$ for some $i=1,\dots,s-1$ we would be able to construct a shorter path. 
		
		Since $B$ is $2$-connected, by Claim \ref{lem:pathsind-1}, length of $P$ equals $s=\delta-1$.	
	\end{proof}
	
	Claim \ref{cor:del-1path} allowed us to find many $(\delta-1)$-ears in $G$. Claim \ref{lem:ifnot2con} below tells us that $G$ is constructed from smaller graphs applying operation Proposition \ref{construction2}, provided there is a $(\delta-1)$-ear whose contraction is not $2$-connected. Finally, Claim \ref{lem:goodpathexists} will guarantee existence of such $(\delta-1)$-ears. 
	
	\begin{claim}\label{lem:ifnot2con}
		Suppose a $2$-connected graph $G$ satisfying equalities $(\spadesuit)_{\delta}$ with all edges of weight $\delta-1$ contains an $(\delta-1)$-ear $P$ whose contraction is not $2$-connected. Then, the graph obtained from $G$ by replacing $P$ by a single edge also satisfies $(\spadesuit)_{\delta}$.
	\end{claim}
	
	\begin{proof}
		Firstly, notice that $\delta$-cycle does not contain an $(\delta-1)$-ear whose contraction is not $2$-connected.
		
		Secondly, if $G$ is not a $\delta$-cycle, then endpoints of a $(\delta-1)$-ear are not joined by an edge. Indeed, if they were joined by an edge $e$, the contraction of $e$ would be not $2$-connected, implying $w(e)=1$, which contradicts the assumption.
		
		Let $G'$ be the graph $G$ with $P$ replaced by a single edge $e$. By the above remarks $G'$ is again a simple graph. Consider a good flat $S'$ in $G'$. By assumption $e$ itself is not a good flat, and $w_{G'}(e)=1$. As $G$ and $G'$ have the same topology, it follows that the only nontrivial case to consider is when both endpoints of $e$ are in $S'$.  
		
		Let $S$ be the subset of vertices of $G$ that coincides with $S'$ on $G'$, but also contains all inner vertices of the path $P$. As $G'/S'=G/S$ and the second is $2$-connected, it follows that $S$ is a good flat. Thus:
		$$w_G(E(S))+1=\delta(|S|-1),$$
		$$w_{G'}(E(S'))+(\delta-1)^2-1+1=\delta(|S'|+\delta-2-1),$$
		$$w_{G'}(E(S'))+1=\delta(|S'|-1).$$
	\end{proof}
	
	\begin{claim}\label{lem:goodpathexists}
		Let $G$ be a $2$-connected graph satisfying equalities $(\spadesuit)_{\delta}$ in which the weight of every edge equals $\delta-1$. Suppose $G$ is not a $\delta$-cycle. Then, contraction of every $(\delta-1)$-ear is not $2$-connected. 
	\end{claim}
	
	\begin{proof}
		We proceed by induction on the size of $V(G)$. 
		
		First, we notice that in order to prove the assertion for every $(\delta-1)$-ear, it is enough to show that contraction of some $(\delta-1)$-ear $P$ is not $2$-connected. Indeed, if this is the case, then by Claim \ref{lem:ifnot2con} we can replace $P$ by a single edge (which has weight $1$) receiving a graph $G'$ satisfying equalities $(\spadesuit)_{\delta}$. By Claim \ref{lem:decompedge1} graph $G'$ is the glueing of graphs $G_1,\dots,G_{\delta-1}$ satisfying equalities $(\spadesuit)_{\delta}$ along copies of the edge $e$. Since $(\delta-1)$-ears in $G$ are disjoint (as by Claim \ref{lem:pathsind-1} they are maximal degree $2$ paths), every $(\delta-1)$-ear in $G$ different from $P$, is a $(\delta-1)$-ear in some $G_i$. By induction, either its contraction in $G_i$ is not $2$-connected, and therefore its contraction in $G$ is not $2$-connected, or $G_i$ is a $\delta$-cycle and then contraction of that $(\delta-1)$-ear in $G$ is not $2$-connected as contraction of $P$ is not $2$-connected.
		
		Consider the case when there exists a maximal ear $P$ in $G$ of length $l$ distinct from $\delta-1$. By Claim \ref{lem:pathsind-1} the complement of $P$ in $G$ has $\delta-l$ $2$-connected components $C_1,\dots,C_{\delta-l}$. In particular, the number of $2$-connected components is at least $2$. Let $G'$ be the graph $G$ in which we replace components $C_2,\dots,C_{\delta-l}$ by single edges. Clearly, $G'$ is smaller than $G$. It is straightforward to check that $G'$ is a $2$-connected graph satisfying equalities $(\spadesuit)_{\delta}$ in which the weight of every edge equals $\delta-1$. Moreover, $G'$ is not a $\delta$-cycle. By Claim \ref{cor:del-1path} graph $G'$ has many $(\delta-1)$-ears. Let $P'$ be a $(\delta-1)$-ear contained in $C_1$. By inductive assumption of the claim, the contraction of $P'$ is not $2$-connected in $G'$. Thus, it is not $2$-connected in $G$. By the initial remark, $G$ satisfies the assertion of the claim.  
		
		The remaining case is when all maximal ears in $G$ are of length $\delta-1$. In particular, every edge is a part of a single $(\delta-1)$-ear. If contraction of some of $(\delta-1)$-ear is not $2$-connected we are done by the initial remark. Otherwise, graph $G$ satisfies the following conditions:
		\begin{itemize}
			\item $G$ is $2$-connected,
			\item deletion of every $(\delta-1)$-ear is $2$-connected (by Claim \ref{lem:pathsind-1}),
			\item contraction of every $(\delta-1)$-ear is $2$-connected (by assumption of the case),
			\item $(\delta-1)|E|=\delta(|V|-1)$,
			\item $(\delta-1)|E(S)|+1=\delta(|S|-1)$ for every good flat $S\subset V$.
		\end{itemize}
		Let $G'$ be a graph obtained from $G$ by replacing every $(\delta-1)$-ear by a single edge. Notice that every good flat $S'$ in $G'$ distinct from an edge is achieved from a good flat $S$ in $G$ by replacing its $(\delta-1)$-ears to single edges. Graph $G'$ satisfies assumptions of Claim \ref{lem:dziwny}, and therefore by Claim \ref{lem:dziwny} this case is empty. 
	\end{proof}
	
	\begin{claim}\label{lem:dziwny}
		There is no graph $G(V,E)$ satisfying the following conditions:
		\begin{itemize}
			\item $G$ is $2$-connected,
			\item deletion of every edge is $2$-connected,
			\item contraction of every edge is $2$-connected (thus, every edge is a good flat),
			\item $|E|=\delta(|V|-1)$,
			\item $|E(S)|+1=\delta(|S|-1)$ for every good flat $S\subset V$ distinct from an edge.
		\end{itemize}
	\end{claim}
	
	\begin{proof}
		Suppose such graphs exists, and let $G$ be one of them with minimum size of $V(G)$.
		
		Notice that not every vertex in $G$ is of degree $\delta+1$. Indeed, in such a case the average degree would be:
		$$\frac{2|E|}{|V|}=\frac{2\delta(|V|-1)}{|V|}=\delta+1.$$
		This gives $|V|(\delta-1)=2\delta$. As $\delta-1$ and $\delta$ are coprime, we would have $\delta=3$ and $|V|=3$, which is not possible.
		
		Let $v$ be a vertex in $G$ of degree distinct from $\delta+1$. Let $T$ be an inclusion minimal set of edges incident to $v$ such that the graph $(V,E\setminus T)$ is not $2$-connected. Denote its $2$-connected components by $C_1,\dots,C_k$, for $k\geq 2$.
		
		Let $H$ be the block graph of this decomposition -- a graph on all $2$-connected components and all cut vertices, with edges between a cut vertex and a component to which it belongs. Clearly, $H$ is acyclic. For $e\in T$, the graph $(V,(E\setminus T)\cup e)$ is $2$-connected, hence $(V,E\setminus T)$ is connected, so $H$ is a tree. Moreover, since for every $e\in T$ the graph $(V,(E\setminus T)\cup e)$ is $2$-connected, the above structure has to be the following (renumbering indices of $C_i$ if necessary): 
		\begin{itemize}
			\item $H$ is a path: $C_i$ has one common vertex with $C_{i+1}$ for $i=1,\dots,k-1$,
			\item $v\in C_1$,
			\item every $e\in T$ joins $v$ with $C_k$.
		\end{itemize} 
		
		Notice that no component $C_i$ is a single edge. Indeed, if $k>2$ and $C_i$ was an edge, its deletion would be not $2$-connected, contradicting the assumption. If $k=2$ and $C_2$ was an edge, $G$ would have a vertex of degree $2$, contradicting the assumption. Finally, if $k=2$ and $C_1$ was an edge, then from equalities $|E(C_2)|+1=\delta(|V(C_2)|-1)$ (as $C_2$ is a good flat) and $|E|=\delta(|V|-1)$ we would get that $v$ is of degree $\delta+1$, which is not the case.
		
		Thus, every component $C_i$ is a good flat distinct from an edge, and we have the equation
		$$|E(C_i)|+1=\delta(|V(C_i)|-1).$$ 
		Summing up we obtain
		$$|E(G)|-|T|+k=\delta(|V(G)|+(k-1)-k).$$ 
		As $|E(G)|=\delta(|V(G)|-1)$ we have $|T|=k$. Further, $C_k\cup\{v\}$ is also a good flat distinct from an edge, hence 
		$$|E(C_k)|+|T|+1=\delta(|V(C_k)|+1-1).$$ 
		Thus $|T|=k=\delta$. 
		
		Let $G'$ be the graph $G(C_\delta\cup\{v\})$ together with an edge $e'$ between vertices $v$ and $w=V(C_{\delta-1})\cap V(C_\delta)$. In other words, $G'$ is equal to $G$ with components $C_1,\dots,C_{\delta-1}$ replaced by an edge $e'$. Notice that $G'$ is again a simple graph, i.e. in $G$ there was no edge between $v$ and $w$. Indeed, if there was such an edge $e$, then since $C_\delta$ is larger than $e$, the contraction of $e$ would be not $2$-connected.
		
		Now, $G'$ satisfies all conditions of the claim:
		\begin{itemize}
			\item $G'$ is $2$-connected, as $G(C_\delta\cup\{v\})$ is,
			\item deletion of every edge is $2$-connected, as deletion of every edge in $G$ is $2$-connected, and $G(C_\delta\cup\{v\})$  is $2$-connected (for deletion of $e'$),
			\item contraction of every edge is $2$-connected, as in $G$ is, and $G(C_\delta)$ is $2$-connected (for contraction of $e'$),
			\item $|E(G')|=\delta(|V(G')|-1)$, as \newline 
			$|E(G')|=|E(C_\delta)|+1+\delta=\delta(|V(C_\delta)|-1)+\delta=\delta(|V(G')|-1)$,
			\item $|E(S')|+1=\delta(|S'|-1)$ for every good flat $S'$ in $G'$ distinct from an edge, as every good flat in $G'$ that does not contain $e'$ identifies with a good flat in $G$ and the equality holds, or if $S'$ contains $e'$ then $S'$ together with all vertices from $C_1,\dots,C_{\delta-1}$ forms a good flat $S$ in $G$, and we have:
			$$|E(S')|+1=|E(S)|+1-\sum_{i=1}^{\delta-1}|E(C_i)|-(\delta-1)+\delta=$$
			$$=\delta(|V(S)|-1-\sum_{i=1}^{\delta-1}(|V(C_i)|-1)+1)=\delta(|V(S')|-1).$$
		\end{itemize}
		
		Of course, $G'$ is smaller than $G$. Contradiction.
	\end{proof}
	
\end{proof}

\section{Characterization of $\delta$-Gorenstein polytopes $B(M(G))$ for $\delta=2$}\label{5}

We begin with the following simplification of equalities $(\spadesuit)_2$ from Theorem \ref{TranslationB}.

\begin{corollary}\label{cor:char d=2}
	Let $G=(V,E)$ be a $2$-connected graph. The polytope $B(M(G))$ is $2$-Gorenstein if and only if the following equalities $(\spadesuit)_2$ are satisfied:
	\begin{enumerate}
		\item $|E|=2(|V|-1)$, and
		\item $|E(S)|=2|S|-3$ for every good flat $S\subset V$.
	\end{enumerate}
\end{corollary}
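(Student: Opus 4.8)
The plan is to derive Corollary \ref{cor:char d=2} directly from Theorem \ref{TranslationB} by specializing $\delta=2$ and simplifying. First I would note that when $\delta=2$, the weight function $w\colon E\to\{1,\delta-1\}=\{1,1\}$ from Theorem \ref{TranslationB} is identically equal to $1$ on every edge. This is forced because the only values the weight function can take are $1$ (when $G\setminus e$ is $2$-connected) and $\delta-1=1$ (when $G/e$ is $2$-connected), and moreover in a $2$-connected graph every edge $e$ satisfies at least one of these two conditions (as observed in the proof of Theorem \ref{TranslationB}: if $G\setminus e$ is not $2$-connected, then $e$ is a chord of no cycle, so $G/e$ is $2$-connected). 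Hence the consistency of the definition of $w$ is automatic for $\delta=2$, and $w\equiv 1$ regardless.

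With $w\equiv 1$ we have $w(E)=|E|$ and $w(E(S))=|E(S)|$ for every vertex subset $S$. Substituting $\delta=2$ into the two equalities $(\spadesuit)_\delta$ of Theorem \ref{TranslationB} gives immediately: equality (1) becomes $|E|=2(|V|-1)$, and equality (2) becomes $|E(S)|+1=2(|S|-1)$, i.e. $|E(S)|=2|S|-3$, for every good flat $S\subseteq V$. Conversely, if these two numerical equalities hold, then taking $w\equiv 1$ (which is a valid weight function for $\delta=2$, as just argued) shows that $(\spadesuit)_2$ from Theorem \ref{TranslationB} is satisfied. Thus $B(M(G))$ is $2$-Gorenstein if and only if (1) and (2) of the corollary hold.

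There is essentially no obstacle here; the only point requiring a moment's care is the claim that the weight function is well-defined and equals $1$ everywhere when $\delta=2$, which amounts to recalling from the proof of Theorem \ref{TranslationB} that in a $2$-connected graph $G$, for every edge $e$ at least one of $G\setminus e$, $G/e$ is again $2$-connected, so that every edge gets weight $1$ from a genuine supporting hyperplane of $B(M(G))$ (either $x_e\geq 0$ of type (1), or, if $e$ is a good flat, the type (2) inequality, which for a single edge $e=\{u,v\}$ reads $x_e\leq\frac{1}{|V|-1}\sum_{f}x_f$ and gives $v_e=\delta-1=1$ as well). The rest is direct substitution. I would therefore write the proof as a short paragraph: set $\delta=2$ in Theorem \ref{TranslationB}, observe $w\equiv 1$, and read off the two equalities.

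\begin{proof}
Apply Theorem \ref{TranslationB} with $\delta=2$. The weight function $w\colon E\to\{1,\delta-1\}$ then takes values in $\{1,1\}=\{1\}$, so $w$ is identically $1$; in particular $w$ is well-defined. (Recall from the proof of Theorem \ref{TranslationB} that for every edge $e$ of a $2$-connected graph $G$, either $G\setminus e$ is $2$-connected, or else $e$ lies on no cycle as a chord, whence $G/e$ is $2$-connected; in the first case the supporting hyperplane $x_e\geq 0$ forces weight $1$, and in the second case $e$ is a good flat and the supporting hyperplane $\sum_{f\in e}x_f\leq\frac{1}{|V|-1}\sum_{f\in E}x_f$ also forces weight $\delta-1=1$.) Since $w\equiv 1$, we have $w(E)=|E|$ and $w(E(S))=|E(S)|$ for every $S\subseteq V$. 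Substituting into the equalities $(\spadesuit)_\delta$ of Theorem \ref{TranslationB}: part (1) becomes $|E|=2(|V|-1)$, and part (2) becomes $|E(S)|+1=2(|S|-1)$, i.e. $|E(S)|=2|S|-3$, for every good flat $S\subseteq V$. Conversely, if these two equalities hold, then $w\equiv 1$ witnesses that $(\spadesuit)_2$ is satisfied, so by Theorem \ref{TranslationB} the polytope $B(M(G))$ is $2$-Gorenstein.
\end{proof}
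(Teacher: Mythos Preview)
Your proof is correct and follows essentially the same approach as the paper: specialize Theorem \ref{TranslationB} to $\delta=2$, observe that the weight function is forced to be identically $1$ (since $\{1,\delta-1\}=\{1\}$ and every edge of a $2$-connected graph has either $G\setminus e$ or $G/e$ $2$-connected), and then substitute to obtain the two stated equalities.
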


\begin{proof}
Suppose $B(M(G))$ is $2$-Gorenstein. Notice that since $G$ is $2$-connected, for every edge $e$ one of the graphs $G\setminus e,G/e$ is $2$-connected. Thus, in Theorem \ref{TranslationB} we must have $w(e)=1$ for every edge $e$. Further, substituting $\delta=2$ we obtain equalities from the assertion.
	
For the opposite implication, define $w(e):=1$. Now $G$ satisfies all conditions from Theorem \ref{TranslationB} and $B(M(G))$ is $2$-Gorenstein.
\end{proof}

\begin{proposition}\label{construction3}
Suppose $G_1,G_2$ are $2$-connected graphs satisfying equalities $(\spadesuit)_2$ from Theorem \ref{TranslationB}. The \emph{collision of $G_1,G_2$ on the corresponding edges $e_1,e_2$}, that is the graph obtained by glueing of $G_1,G_2$ along edges $e_1,e_2$ and removing these edges, satisfies equalities $(\spadesuit)_2$. 
\end{proposition}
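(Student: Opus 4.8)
The plan is to verify, via Corollary~\ref{cor:char d=2}, the two equalities $(\spadesuit)_2$ for the collision $G=(V,E)$, which is obtained from $G_i=(V_i,E_i)$ by identifying the endpoints of $e_1$ and $e_2$ (call them $u,v$) and deleting $e_1,e_2$. First one checks that $G$ is again a simple $2$-connected graph: simplicity because the only edge of $G_i$ between $u$ and $v$ was $e_i$, which is removed. For $2$-connectedness I would record a \emph{replacement lemma} that will serve as the technical workhorse: if $H$ is a graph, $xy\in E(H)$, and $B$ is a connected graph with $V(B)\cap V(H)=\{x,y\}$ possessing an $x$--$y$ path (for instance $B=G_i[T]-e_i$ for a $2$-connected induced subgraph $G_i[T]$ of $G_i$ containing $e_i$), then the graph $(H-xy)\cup B$ obtained by replacing the edge $xy$ with $B$ is $2$-connected if and only if $H$ is. Since $G$ is exactly $G_1$ with $e_1$ replaced by $B=G_2-e_2$ (connected, with a $u$--$v$ path as $G_2$ is $2$-connected), $G$ is $2$-connected. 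Equality $(1)$ of $(\spadesuit)_2$, namely $|E|=2(|V|-1)$, is then a one-line computation from $|E_i|=2(|V_i|-1)$ together with $|V|=|V_1|+|V_2|-2$ and $|E|=|E_1|+|E_2|-2$.

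The substance is equality $(2)$: $|E_G(S)|=2|S|-3$ for every good flat $S\subsetneq V$, where (as in Corollary~\ref{cor:scianyBG}) a good flat is a vertex set with $G[S]$ and $G/E_G(S)$ both $2$-connected. I would classify such $S$ by how it meets the two sides. Because in $G$ the two sides communicate only through $u$ and $v$, connectedness of $G[S]$ forces either (a) $S\subseteq V_1$ (or symmetrically $S\subseteq V_2$), or (c) $S$ meets both $V_1\setminus\{u,v\}$ and $V_2\setminus\{u,v\}$, in which case $2$-connectedness of $G[S]$ further forces $u,v\in S$. Two observations drive every case. First, writing $S_i=S\cap V_i$, one has $E_G(S)=(E_{G_1}(S_1)\setminus\{e_1\})\sqcup(E_{G_2}(S_2)\setminus\{e_2\})$, and the replacement lemma (used in both directions) transports $2$-connectedness of the restriction between $G[S]$ and the $G_i[S_i]$. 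Second, when $u,v\in S$ the contraction $G/E_G(S)$ identifies $u$ with $v$, so it is the one-vertex union of $G_1/E_{G_1}(S_1)$ and $G_2/E_{G_2}(S_2)$ (with $e_i$ now a loop); and such a one-vertex union is $2$-connected only when all but one of the two pieces is a single vertex.

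Carrying this out: in case (a) with $\{u,v\}\not\subseteq S$ one has $G[S]=G_1[S]$ and $E_G(S)=E_{G_1}(S)$, and the replacement lemma upgrades $2$-connectedness of $G/E_G(S)$ to that of $G_1/E_{G_1}(S)$, so $S$ is a good flat of $G_1$ and $|E_G(S)|=|E_{G_1}(S)|=2|S|-3$ by $(\spadesuit)_2$ for $G_1$. In case (a) with $\{u,v\}\subseteq S$, the one-vertex union $G/E_G(S)$ is $2$-connected only when $G_1/E_{G_1}(S)$ is trivial, i.e. $S=V_1$; then $|E_G(S)|=|E_1|-1=2|V_1|-3=2|S|-3$. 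In case (c), with $S_i\supseteq\{u,v\}$, the one-vertex union is $2$-connected only when one side, say $S_1=V_1$, is fully contracted, while the other satisfies $S_2\subsetneq V_2$ and $G_2/E_{G_2}(S_2)$ is $2$-connected; since also $G_2[S_2]$ is $2$-connected (by the replacement lemma applied from $G[S]$), $S_2$ is a good flat of $G_2$, and $|E_G(S)|=(|E_1|-1)+(|E_{G_2}(S_2)|-1)=(2|V_1|-3)+(2|S_2|-4)=2|S|-3$ using $|S|=|V_1|+|S_2|-2$. The only remaining configuration, $S_1=V_1$ and $S_2=V_2$, is $S=V$, which is excluded. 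Hence every good flat satisfies $(2)$, and $G$ satisfies $(\spadesuit)_2$.

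The main obstacle is the careful control of $2$-connectedness throughout: proving the replacement lemma and, above all, keeping track of which vertices are identified when one contracts a flat $E_G(S)$ whose vertex set contains both $u$ and $v$, so that $G/E_G(S)$ genuinely degenerates into a one-vertex union and one can read off that most of the vertex set must be swallowed. Once that structural picture is in place, everything else is routine bookkeeping with the equalities $(\spadesuit)_2$ (equivalently $(\heartsuit)_2$ of Theorem~\ref{TranslationB2}) for $G_1$ and $G_2$.
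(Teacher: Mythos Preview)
Your proof is correct and follows essentially the same route as the paper's: both split the analysis of a good flat $S$ according to whether it contains $\{u,v\}$, show that $S$ must then sit inside one $G_i$ or swallow one $G_i$ entirely, and reduce the edge count to the $(\spadesuit)_2$ equalities already known for $G_1,G_2$. Your version is somewhat more explicit---you isolate the ``replacement lemma'' and the ``one-vertex union'' picture where the paper simply asserts that otherwise the contracted vertex would be separating---but the underlying argument and the bookkeeping are the same.
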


\begin{proof}
	The first equality $(\spadesuit)_2$ (using Corollary \ref{cor:char d=2}) is easy to check. 
	
	For the second, let $S$ be a good flat in $G$ -- the collision of $G_1$ and $G_2$ on the corresponding edges $e_1,e_2$. Denote by $v_1,v_2$ vertices in $G$ that come from both $G_1$ and $G_2$.
	
	First suppose that at most one vertex, $v_1$ or $v_2$, belongs to $S$. Then $S$ must be contained either in $G_1$ or $G_2$, as otherwise it would not be $2$-connected. In this case, without loss of generality, $S\subset G_1$ and we claim that $S$ a good flat in $G_1$. Indeed, it is clearly $2$-connected and contraction of $S$ in $G_1$ cannot lead to a separating vertex, as it would also in contraction of $S$ in $G$. Hence $S$ is a good flat in $G_1$ and the second equality $(\spadesuit)_2$ holds.
	
	Now suppose that $v_1,v_2\in S$. Then $S$ would be separating, unless $S$ contains $G_1$ or $G_2$. Thus without loss of generality we may assume $G_1\subset S$. Further, $S\cap V(G_2)$ is a good flat in $G_2$. Thus $|S|=|V(G_1)|-2+|S\cap V(G_2)|$, and $|E_G(S)|=$ 
	$$|E(G_1)|+|E_{G_{2}}(S\cap V(G_2))|-2=2(|V(G_1)|-1)+2|S\cap V(G_2)|-3-2=2|S|-3.$$
\end{proof}

\begin{theorem}\label{2-characterization}
Let $G$ be a $2$-connected graph. The following conditions are equivalent:
	\begin{enumerate}
		\item $G$ satisfies $(\spadesuit)_2$ from Theorem \ref{TranslationB},
		\item $G$ can be obtained using construction described in \ref{construction3} from the clique $K_4$.
	\end{enumerate} 
\end{theorem}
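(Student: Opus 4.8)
The implication $(2)\Rightarrow(1)$ is immediate: one checks directly (as done earlier for $\delta$-cycles, and here just as easily) that $K_4$ satisfies $(\spadesuit)_2$, and then Proposition \ref{construction3} propagates the property through every collision step. So the whole content lies in $(1)\Rightarrow(2)$, which I would prove by induction on $|V(G)|$.

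The base case is to show that the only $2$-connected graph on few vertices satisfying $(\spadesuit)_2$ that is not further decomposable is $K_4$ itself; in particular one should verify that there is no smaller "irreducible" example. For the inductive step the plan is to find, in any $G$ satisfying $(\spadesuit)_2$ that is not $K_4$, a good flat $S$ that is a proper nonempty "chunk" along which $G$ is a collision $G = G_1 \star G_2$ of two strictly smaller $2$-connected graphs $G_1,G_2$ each satisfying $(\spadesuit)_2$, glued-then-cut along edges $e_1,e_2$. Concretely, I would look for a separating pair of vertices $\{v_1,v_2\}$ such that $G\setminus\{v_1,v_2\}$ is disconnected — the existence of such a pair should follow from $G$ not being $3$-connected, and $G$ cannot be $3$-connected for the following counting reason: the first equality in $(\spadesuit)_2$ forces $|E| = 2|V|-2$, so the average degree is $2 + \tfrac{2(|V|-2)}{|V|} < 4$, hence $G$ has a vertex of degree $\le 3$; combined with $2$-connectivity and the facet/good-flat equalities this should be incompatible with $3$-connectivity once $|V|>4$. (The case $|V|=4$ with $|E|=6$ is exactly $K_4$.) Given a separating pair $\{v_1,v_2\}$, let $A$ be one connected component of $G\setminus\{v_1,v_2\}$ and $S=A\cup\{v_1,v_2\}$. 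One then shows $v_1,v_2$ are non-adjacent in $G$ (else that edge, having weight $1$, could be used to shortcut and the component could not be a genuine piece — this needs a short argument using the good-flat equality), so that putting back the edge $v_1v_2$ in each piece gives simple graphs $G_1$ on $S\cup\{e_1\}$ and $G_2$ on $(V\setminus A)\cup\{e_2\}$, and $G$ is their collision on $e_1,e_2$.

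The crux is then to verify that both $G_1$ and $G_2$ are $2$-connected and satisfy $(\spadesuit)_2$, so that induction applies. Two-connectedness: $G_i$ is $G$ restricted to a piece with the edge $v_1v_2$ added back, and since $G$ is $2$-connected and $\{v_1,v_2\}$ separates, each piece-with-edge has no cut vertex — a routine check. For the equalities $(\spadesuit)_2$ on $G_1$: the first equality $|E(G_1)| = 2(|V(G_1)|-1)$ should come from writing $S$ itself as a good flat in $G$ (restriction of $G$ to $S$ is $2$-connected, and contraction is $2$-connected because $\{v_1,v_2\}$ was a minimal separator), applying $|E(S)| = 2|S|-3$, and accounting for the one added edge $e_1$. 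For the second equality, take a good flat $T$ of $G_1$; if $T$ avoids $e_1$ it is literally a good flat of $G$ and the equality is inherited; if $T$ contains both $v_1,v_2$ one enlarges $T$ by all of $V\setminus A$ (using that $G_2$, or at least $G\setminus A$, is $2$-connected after contraction) to a good flat $T'$ of $G$, applies $(\spadesuit)_2$ to $T'$, and subtracts the contribution of the $G_2$-side, exactly mirroring the bookkeeping in the proof of Proposition \ref{construction3} read in reverse.

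The main obstacle I anticipate is precisely this reverse bookkeeping: making sure that in every sub-case the set one exhibits really is a good flat (both restriction and contraction $2$-connected), especially the edge-cases where $T$ contains exactly one of $v_1,v_2$, or where one of the pieces degenerates to a single edge — the latter must be excluded, presumably again by a degree/count argument as in Claim \ref{lem:dziwny}, since a pendant triangle or a degree-$2$ vertex would violate $2$-connectedness or the average-degree constraint. Once these exclusions are in place, the counting identities are forced and the induction closes. I would structure the write-up as: (i) $(2)\Rightarrow(1)$ one line; (ii) reduction to finding a separating pair via the average-degree argument; (iii) the collision decomposition along a minimal separating pair, with the non-adjacency and non-degeneracy lemmas; (iv) verification of $(\spadesuit)_2$ on both pieces; (v) invoke induction.
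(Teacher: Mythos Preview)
Your overall architecture matches the paper's: show $(2)\Rightarrow(1)$ via Proposition~\ref{construction3}, then for $(1)\Rightarrow(2)$ induct on $|V|$, treating the $3$-connected case as the base and decomposing along a separating pair otherwise. But two of your key steps do not go through as written.

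First, your treatment of the $3$-connected case is incorrect. Observing that the average degree is below $4$ only yields \emph{some} vertex of degree $\le 3$, and that is perfectly compatible with $3$-connectivity (indeed $K_4$ itself, or any cubic $3$-connected graph, has all degrees equal to $3$). The actual argument, as in Claim~\ref{3-connected}, is that $3$-connectivity makes $V\setminus\{v\}$ a good flat for \emph{every} $v$, so $(\spadesuit)_2$ forces $\deg(v)=3$ for all $v$; combining $\tfrac{3}{2}|V|=|E|=2|V|-2$ then gives $|V|=4$.

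Second, and more seriously, in the decomposition step you assert that for $S=A\cup\{v_1,v_2\}$ the restriction $G|_S$ is $2$-connected. This is false in general: if $A$ happens to be a path attached at its ends to $v_1$ and $v_2$, then $G|_S$ is a bare path. The paper handles this via Lemma~\ref{connectedcomponents}, which first proves by an edge-count over the good flats $G\setminus H_i$ that $G\setminus\{v_1,v_2\}$ has \emph{exactly two} components, and then uses a further block-tree computation to show each $G\setminus H_i$ is $2$-connected and that $v_1v_2\notin E$. None of this is ``routine'' or a consequence of $\{v_1,v_2\}$ being a minimal separator; it genuinely uses $(\spadesuit)_2$. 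Without it you cannot claim $S$ (or its complement) is a good flat, so neither the first equality $|E(G_i)|=2(|V(G_i)|-1)$ nor the non-adjacency of $v_1,v_2$ is available, and the induction does not close. Once you insert the analogue of Lemma~\ref{connectedcomponents}, the rest of your outline (the verification of $(\spadesuit)_2$ on each piece by extending good flats across the cut) is essentially what the paper does.
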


\begin{proof}

Since $K_4$ satisfies $(\spadesuit)_2$ implication $(2)\Rightarrow (1)$ is a consequence of Proposition \ref{construction3}. 

The remaining part of this section is devoted to the proof of the implication $(1)\Rightarrow (2)$.

\begin{claim}\label{3-connected}
Suppose $G$ is a $3$-connected graph satisfying $(\spadesuit)_2$. Then, $G=K_4$.
\end{claim}

\begin{proof}
For a vertex $v\in V$ the graph $G\setminus\{v\}$ is 2-connected, so $V\setminus\{v\}$ is a good flat for every $v\in V$. Therefore, the number of edges in $G\setminus\{v\}$ equals $2|V|-5$. On the other hand, the number of edges in $G\setminus \{v\}$ is $|E|-\deg(v)=2|V|-2-\deg(v)$, which implies $\deg(v)=3$ for every vertex $v\in V$. This yields to $$2|V|-2=|E|=\frac{3}{2}|V|,$$ which implies $|V|=4$, and $G=K_4$.
\end{proof}

In the remaining cases $G$ is not $3$-connected. Therefore, we may assume that there exist two separating vertices $v_1,v_2 \in V$. Denote the \emph{connected} components of $G\setminus\{v_1,v_2\}$ by $\SC_1,\dots,\SC_n$, where $n\ge 2$. Every component $\SC_i$ must be joined with both $v_1$ and $v_2$, since none of them is a separating vertex of the graph $G$. 

\begin{lemma}\label{connectedcomponents}
 Let $G$ be a $2$-connected graph satisfying $(\spadesuit)_2$. Suppose that $\{v_1,v_2\}$ is separating set and that $\SC_1,\dots,\SC_n$ are connected components of $G\setminus\{v_1,v_2\}$. Then the following holds:
\begin{enumerate}
\item $n=2$ (after removing $v_1,v_2$ there are exactly two connected components),
\item there is no edge between $v_1$ and $v_2$,
\item $G\setminus\SC_i$ is $2$-connected for every $i$.
\end{enumerate}
\end{lemma}

\begin{proof}
We prove $(1)$ by contradiction. Suppose that $n\ge 3$.

Suppose that $G\setminus \SC_i$ has a separating vertex $v$. However, $v$ is not separating vertex in $G$, since $G$ is $2$-connected. Therefore, $v$ separates vertices $v_1$ and $v_2$. This is impossible since they are connected by a component $\SC_j$ such that $j\neq i$, $v\not\in \SC_j$.

This means that $G\setminus \SC_i$ is $2$-connected. Consequently $G\setminus \SC_i$ is a good flat, since $G$ is $2$-connected and $\SC_i$ is (by definition) connected (implying $G/(G\setminus\SC_i)$ is $2$-connected).

We sum the number of edges in $G\setminus \SC_i$. Every edge in $G$ is counted exactly $n-1$ times except of the edge between $v_1$ and $v_2$ which is counted $n$ times, if it exists. Therefore:

$$(n-1)|E|\le\sum_{i=1}^n|E(G\setminus \SC_i)|=\sum_{i=1}^n (2|V(G\setminus \SC_i)|-3)=$$
$$=\sum_{i=1}^n (2|V|-2|\SC_i|-3)=2(n-1)|V|+4-3n.$$
 We get an inequality
 $$2(n-1)|V|-2(n-1)\le 2(n-1)|V|+4-3n,$$
 $$n\le 2,$$
 
which is a contradiction. Thus, $n=2$ and we proved $(1)$.

For $(2)$ observe that if sets $G\setminus \SC_i$ are $2$-connected (which is the statement of part $(3)$) we can do the same calculation and conclude that $v_1$ and $v_2$ are not joint by an edge. Thus, it remains to prove $(3)$.

For $(3)$ notice first, that if there is an edge between $v_1$ and $v_2$ (that is if the statement of $(2)$ is false), then $(3)$ is clear since $G$ is $2$-connected. Otherwise, denote by $C_1^1,\dots,C_k^1$ $2$-connected components of $G\setminus \SC_1$ and by $C_1^2,\dots,C_l^2$ $2$-connected components of $G\setminus \SC_2$. If $G\setminus \SC_1$ is not $2$-connected, then vertices $v_1$ and $v_2$ cannot lie in the same component $C^1_i$ because that would mean that also $G$ is not $2$-connected. Thus, every $C_j^i$ is a good flat (as it is $2$-connected and its contraction does not destroy $2$-connectivity of $G$). The fact that the block graph of $2$-connected components is a tree (in our case in fact a path) implies 
$$\sum_{j=1}^k|V(C_j^1)|=|V(G\setminus\SC_1)|+k-1.$$
We may again compute the edges assuming $(2)$ is true: 
$$2|V|-2=|E|=\sum_{j=1}^k|E(C_j^1)|+\sum_{j=1}^l|E(C_j^2)|=$$
$$=\sum_{j=1}^k(2|V(C_j^1)|-3)+\sum_{j=1}^l(2|V(C_j^2)|-3)=$$
$$=2\sum_{j=1}^k|V(C_j^1)|-3k+2\sum_{j=1}^l|V(C_j^2)|-3l=$$
$$=2|V(G\setminus\SC_1)|+2k-2-3k+2|V(G\setminus\SC_2)|+2l-2-3l=2|V|-k-l.$$
We conclude $k+l=2$ which gives $k=l=1$ and that $G\setminus \SC_i$ are 2-connected.
\end{proof}

Now we are ready to prove the implication $(1)\Rightarrow(2)$ from Theorem \ref{2-characterization}. The proof is by induction on the number of vertices of a graph. 

Let $G$ be a $2$-connected graph satisfying $(\spadesuit)_2$. Suppose that the statement is true for all graphs with smaller number of vertices than $G$.

If $G$ is $3$-connected, then $G=K_4$ by Lemma \ref{3-connected}. 

Otherwise, $G$ is not $3$-connected and has two separating vertices $v_1$ and $v_2$ which by Lemma \ref{connectedcomponents} are not joined by an edge. Using the notation from Lemma \ref{connectedcomponents} define $G_i$ as the graph induced by $V\setminus \SC_i$ with an additional edge between $v_1$ and $v_2$. It follows that the graph $G$ is the collision of graphs $G_1$ and $G_2$ on the edge $\{v_1,v_2\}$. 

Now, it is sufficient to show that graphs $G_1,G_2$ are $2$-connected and satisfy conditions $(\spadesuit)_2$, since then by induction hypothesis both can be obtained by construction from Proposition \ref{construction3} and so can $G$. Cases for $G_1$ and for $G_2$ are analogous. Lemma \ref{connectedcomponents} gives that $G_1$ is $2$-connected. Since $V\setminus\SC_1$ is a good flat 
$$|E(G_1)|=1+2|V(G_1)|-3=2|V(G_1)|-2,$$ 
so $G_1$ has the right number of edges. It remains to check that every good flat has the right number of edges. Let $S\subset V\setminus \SC_1$ be a good flat in $G_1$. There are 2 cases:

\begin{itemize}
\item $\{v_1,v_2\}\not \subset S$. Then $S$ is also a good flat in the graph $G$ because the component $\SC_1$ is joined with both $v_1$ and $v_2$. Consequently, the number of edges in the induced graph ${G_1}|_{S}$ is $2|S|-3$.
\item $v_1,v_2\in S$. Then the graph induced by $S\cup \SC_1$ is still $2$-connected because both $G|_{S}$ and $G\setminus \SC_2$ are $2$-connected. It follows that $S\cup \SC_1$ is a good flat in $G$ and the number of edges in the induced subgraph of $G$ is $2(|S|+|\SC_1|)-3$. Clearly, $|E_G(S\cup\SC_1)|=|E(G\setminus\SC_2)|+|E_{G_1}(S)|-1$ which yields to $|E_{G_1}(S)|=2|S|-3$.
\end{itemize}

\end{proof}


\end{document}